\author{Tomoki Ohsawa}
\address{Department of Mathematics \& Statistics, University of Michigan--Dearborn, 4901 Evergreen Road, Dearborn, MI 48128-2406}
\email{ohsawa@umich.edu}
\title[Symmetry Reduction of Optimal Control Systems]{Symmetry Reduction of Optimal Control Systems and Principal Connections}
\date{\today}
\keywords{optimal control, symmetry and reduction, momentum maps, principal connections, Hamiltonian reduction, Poisson reduction}
\subjclass[2010]{49J15, 53D20, 37J15, 70H05, 70H25}
\theoremstyle{plain}
\newtheorem{theorem}{Theorem}[section]
\newtheorem{lemma}[theorem]{Lemma}
\newtheorem{proposition}[theorem]{Proposition}
\theoremstyle{definition}
\newtheorem{definition}[theorem]{Definition}
\newtheorem{example}[theorem]{Example}
\theoremstyle{remark}
\newtheorem{remark}[theorem]{Remark}
\def\od#1#2{\dfrac{d#1}{d#2}}
\def\pd#1#2{\dfrac{\partial #1}{\partial #2}}
\def\covd#1#2{\dfrac{D#1}{D#2}}
\def\tcovd#1#2{D#1/D#2}
\def\parentheses#1{\!\left(#1\right)}
\def\brackets#1{\!\left[#1\right]}
\def\braces#1{\!\left\{#1\right\}}
\def\Span{\mathop{\mathrm{span}}\nolimits} 
\def\DS{\displaystyle}
\def\R{\mathbb{R}}
\def\defeq{\mathrel{\mathop:}=}
\def\eqdef{=\mathrel{\mathop:}}
\def\setdef#1#2{ \left\{ #1 \ |\ #2 \right\} }
\def\ip#1#2{\left\langle#1,#2\right\rangle}
\newcommand{\hor}{\operatorname{hor}}
\newcommand{\hl}{\operatorname{hl}}
\def\eps{\varepsilon}
\def\blfootnote{\xdef\@thefnmark{}\@footnotetext}
\def\FL{\mathbb{F}L}
\def\FH{\mathbb{F}H}
\def\F{\mathbb{F}}
\newcommand{\Ad}{\operatorname{Ad}}
\newcommand{\ad}{\operatorname{ad}}
\begin{document}

\footskip=.6in

\blfootnote{{\em Note}: This is a modified and expanded version of the conference paper~\cite{Oh2011}.}

\begin{abstract}
  This paper explores the role of symmetries and reduction in nonlinear control and optimal control systems.
  The focus of the paper is to give a geometric framework of symmetry reduction of optimal control systems as well as to show how to obtain explicit expressions of the reduced system by exploiting the geometry.
  In particular, we show how to obtain a principal connection to be used in the reduction for various choices of symmetry groups, as opposed to assuming such a principal connection is given or choosing a particular symmetry group to simplify the setting.
  Our result synthesizes some previous works on symmetry reduction of nonlinear control and optimal control systems.
  Affine and kinematic optimal control systems are of particular interest: We explicitly work out the details for such systems and also show a few examples of symmetry reduction of kinematic optimal control problems.
\end{abstract}

\maketitle

\section{Introduction}
\subsection{Background}
Many control systems, particularly those arising from mechanical systems, have symmetries---often translational and rotational, and sometimes combinations of them.
Such a symmetry is usually described as an invariance or equivariance under an action of a Lie group, and the system can be reduced to a lower-dimensional one or decoupled into subsystems by exploiting the symmetry.
\citet{NiSc1982} and \citet{GrMa1985} formulated symmetries of nonlinear control systems from the differential-geometric point of view, and also showed how one can reduce a control system with symmetry to a quotient space.

Likewise, optimal control systems also have such symmetries.
\citet{GrMa1984} showed that, in relation to the work in \cite{GrMa1985}, one can decompose optimal feedback laws by exploiting the symmetries of control systems; \citet{Sc1987} showed a method to analyze symmetries of optimal Hamiltonians without explicitly calculating them, while \citet{LeCoDiMa2004} analyzed symmetries of vakonomic systems and applied their result to optimal control problems, \citet{EcMaMuRo2003} from the pre-symplectic point of view, and \citet{BlSc2000} and \citet{IbPeSa2010} using Dirac structures.

Symmetry reduction of optimal control systems are desirable from a computational point of view as well.
Given that solving optimal control problems usually involves iterative methods such as the shooting method (as opposed to solving a single initial value problem), reducing the system to a lower-dimensional one results in a considerable reduction of the computational cost.

From a theoretical point of view, a certain class of optimal control problems has a rich geometric structure, and provides many interesting questions relating differential-geometric ideas with control-theoretic problems.
Most notably, \citet{Mo1990, Mo1991a, Mo1993a, Mo1993b, Mo2002}, following the work of \citet{ShWi1987, ShWi1989}, explored optimal control of deformable bodies, such as the falling cat problem, from the differential-geometric point of view.
In particular, principal bundles, along with principal connections on them defined by momentum maps, are identified as a natural geometric setting for such problems.
The same geometric setting applies to kinematic control of nonholonomic mechanical systems (see, e.g., \citet{KeMu1995}, \citet[Chapters~7 and 8]{MuLiSa1994}, and \citet{LiCa1993}), where the principal connections are defined by the constraints instead of momentum maps.
This geometric setting also gives rise to geometric phases and holonomy (see, e.g., \citet{MaMoRa1990} and references therein), which have applications in motion generation of mechanical systems by shape change.

\subsection{Main Results and Comparison with Existing Literature}
\label{ssec:MainResultsAndComparison}
Figure~\ref{fig:RelatingOCPs} gives a schematic overview of the results in the paper and their relationships.
\begin{figure}[h!]
  \sf
  \centerline{
    \xymatrix@!0@R=1.1in@C=2in{
       \ar[d]_{\text{(Section~\ref{sec:SymmetryInNonlinearControlSystems})}}^{\text{Reduction}}="a"
       \framebox{\small\strut\minibox[c]{Control system\\with symmetry}} \ar[r]^{\text{Cost function}}_{\text{with symmetry}}  & \framebox{\small\strut\minibox[c]{Optimal control\\problem}} \ar[r]^{\text{PMP}\quad} & \framebox{\small\strut\minibox[c]{Hamiltonian system\\with symmetry}} \ar[d]^{\text{(Section~\ref{sec:SymRedInOptimalControlSystems})}}_{\text{Reduction}}="b" \POS(51,-14.2)*+\txt{\raisebox{-2.5pc}{\minibox[c]{\footnotesize Principal connection\\\scriptsize(Section~\ref{sec:PrincipalConnection})}}} \ar@{=>} "a" \ar@{=>} "b"
      \\
      \framebox{\small\strut\minibox[c]{Reduced\\control system}} \ar@{-->}[rr]_{\text{Reduced PMP (Section~\ref{ssec:PoissonRedOfPMP})}} & & \framebox{\small\strut\minibox[c]{Reduced\\Hamiltonian system}}
    }
  }
  \caption{Schematic overview of reduction of control and optimal control systems with references to corresponding sections in the paper. See also the outline in Section~\ref{ssec:Outline}.}
  \label{fig:RelatingOCPs}
\end{figure}
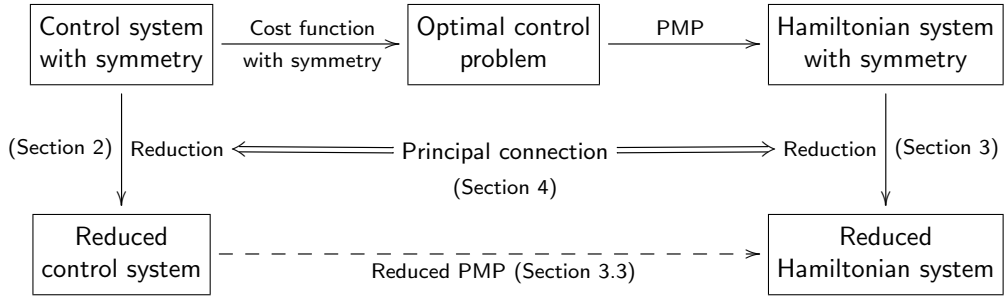
We first characterize symmetries in nonlinear control systems and use a principal connection to reduce such systems.
We then discuss the associated symmetries in optimal control problem of such systems following \citet{GrMa1984}, and apply Hamiltonian reduction theory to the Pontryagin maximum principle (PMP) for optimal control systems with symmetries; the principal connection plays an important role here as well.
In particular, we apply the Poisson reduction of \citet{CeMaPeRa2003} to the Hamiltonian system given as a necessary condition for optimality by the Pontryagin maximum principle.
The resulting Hamilton--Poincar\'e equations give a reduced set of equations for optimality, and are naturally considered as a reduced maximum principle applied to the reduced control system.

We note that \citet{IbPeSa2010} study a similar problem in a more general setting with a slightly different focus: We assume that one can eliminate the control to obtain a Hamiltonian system on a cotangent bundle, whereas \citet{IbPeSa2010} do not make the assumption and exploit Dirac structures to handle those cases where one cannot easily obtain a Hamiltonian system explicitly, an approach originally due to \citet{BlSc2000}.
Therefore, our problem setting and geometric framework are in fact a special case of those in \cite{IbPeSa2010}.
At the expense of generality, however, we focus on the practical issue of obtaining an explicit expression for the reduced system.
Specifically, {\em our specialization leads us to a prescription to obtain a principal connection for a given optimal control system with various symmetries, as opposed to assuming, as in \cite{IbPeSa2010}, that it is given at the outset or deliberately choosing a particular symmetry group to simplify the geometric setting}.
In particular, for affine and kinematic optimal control systems, we may explicitly characterize the principal connection using the nonholonomic connection of \citet{BlKrMaMu1996}.

We also note that a reduced maximum principle (see Fig.~\ref{fig:RelatingOCPs} and Section~\ref{ssec:PoissonRedOfPMP}) is discussed by \citet{BlSc2000}; however their definition of symmetry of control systems is slightly more restrictive compared to that of \cite{IbPeSa2010} and the present paper (see Remark~\ref{remark:ComparisonWithBlSc2000}).
Note also that, in their setup, it is shown that their reduced equations are simplified due to the transversality condition.
However, this is not in general true for the case with fixed endpoints and thus the reduced equations become more complicated (see Remark~\ref{remark:ComparisonWithBlSc2000-2}).

{\em The construction of principal connections developed here turns out to be a generalization of the mechanical connection used in the falling cat problem as well as those used in kinematic control of nonholonomic systems}.
In the falling cat problem, there is a natural choice of principal connection that arises from the problem setting, but the same construction of principal connection applies to kinematic control problems only by choosing certain symmetry groups to realize the same geometric setting; in other words, one does not have the freedom to choose the symmetry group to be used in the reduction.
{\em Our construction does not have such restriction and hence can be applied to a wider class of control systems with symmetries}.

As a result, we synthesize some previous works by showing how the basic settings of those works arise as special cases of our result; these include optimal control of deformable bodies mentioned above and also the Lie--Poisson reduction of optimal control systems on Lie groups of \citet{Kr1993}.

\subsection{Outline}
\label{ssec:Outline}
We first define, in Section~\ref{sec:SymmetryInNonlinearControlSystems}, symmetries in nonlinear control systems, and show reduction of such systems by the symmetries (see Fig.~\ref{fig:RelatingOCPs}).
Section~\ref{sec:SymRedInOptimalControlSystems} first briefly discusses Poisson reduction of \citet{CeMaPeRa2003} for Hamiltonian systems and then applies it to the Hamiltonian system defined by the maximum principle and obtain the Hamilton--Poincar\'e equations for such systems.
Section~\ref{sec:PrincipalConnection} addresses the issue of how one should choose the principal connection to be used in the reduction of optimal control systems.
Section~\ref{sec:Examples} gives various examples to show how the theory specializes to several previous works on the subject as well as to illustrate how the reduction decouples the optimal control system.

\section{Symmetry and Reduction of Nonlinear Control Systems}
\label{sec:SymmetryInNonlinearControlSystems}
\subsection{Nonlinear Control Systems}
Let $M$ be a smooth manifold and $\tau_{M}: TM \to M$ be its tangent bundle; let $E \defeq M \times \R^{d}$ and see $\pi^{E}: E \to M$ as a (trivial) vector bundle\footnote{More generally, we may take a fiber bundle for $E$ (see, e.g., \citet{NiSc1982} and references therein).}; also let $f: E \to TM$ be a fiber-preserving smooth map, i.e., the diagram
\begin{equation*}
  \xymatrix@R=0.225in@C=.225in{
    E \ar[rr]^{f} \ar[rd]_{\pi^{E}\!} & & TM \ar[ld]^{\!\tau_{M}}
    \\
    & M &
  }
\end{equation*}
commutes.
Then a {\em nonlinear control system} is defined by
\begin{equation}
  \label{eq:NonlinearControlSystem}
  \dot{x} = f(x, u).
\end{equation}

\subsection{Symmetry in Nonlinear Control Systems}
\label{ssec:SymmetryInNonlinearControlSystems}
Following \citet{Sc1981} and \citet{NiSc1982} (see also \citet{GrMa1985}  and \citet{Sc1987}), we assume that the control system~\eqref{eq:NonlinearControlSystem} has a symmetry in the following sense:
Let $G$ be a free and proper (left) Lie group action on $M$.
We have $\Phi: G \times M \to M$ or $\Phi_{g}: M \to M$ for any $g \in G$; as a result we have the principal bundle
\begin{equation*}
  \pi: M \to M/G.
\end{equation*}
The action $\Phi_{g}$ gives rise to the tangent lift $T\Phi_{g}: TM \to TM$.
Let us also assume that we have a linear representation of $G$ on the control space $\R^{d}$, i.e., we have a representation $\sigma_{(\cdot)}: G \to GL(d, \R)$.
Then we define an action of $G$ on $E = M \times \R^{d}$ as follows:
\begin{equation}
  \label{eq:Psi}
  \Psi_{g}: E \to E;
  \quad
  (x, u) \mapsto \parentheses{ \Phi_{g}(x), \sigma_{g}(u) } = (g x, g u),
\end{equation}
where we introduced the shorthand notation $g x \defeq \Phi_{g}(x)$ and $g u \defeq \sigma_{g}(u)$.
\begin{remark}
  \label{remark:NontrivialActionToControls}
  In many examples, the representation $\sigma_{(\cdot)}: G \to GL(d, \R)$ turns out to be trivial.
  However, there are non-trivial cases as well: See Section~\ref{ssec:ClebschOptimalControl}.
\end{remark}

We are now ready to define a symmetry for a nonlinear control system (see \citet[Definition~8]{IbPeSa2010}): We say that the nonlinear control system~\eqref{eq:NonlinearControlSystem} has a $G$-symmetry if the map $f: E \to TM$ is equivariant under the $G$-actions on $E$ and $TM$ defined above, i.e.,
\begin{equation}
  \label{eq:f-symmetry}
  T\Phi_{g} \circ f = f \circ \Psi_{g},
\end{equation}
or the diagram
\begin{equation*}
  \xymatrix@!0@R=0.55in@C=.7in{
    E \ar[r]^{f} \ar[d]_{\Psi_{g}} & TM \ar[d]^{T\Phi_{g}}
    \\
    E \ar[r]_{f} & TM 
  }
\end{equation*}
commutes for any $g \in G$.

\begin{remark}
  \label{remark:ComparisonWithBlSc2000}
  Note that this definition of symmetry is more general compared to that of \citet{BlSc2000}.
  Their definition of symmetry of the control vector fields \cite[Eq.~(14)]{BlSc2000}, i.e.,
  \begin{equation}
    \label{eq:SymmetryInBlSc2000}
    [f(\cdot,u), \mathfrak{g}_{M}] = 0,
  \end{equation}
  where $\mathfrak{g}_{M}$ is the set of infinitesimal generators, is rather restrictive for us, since this is not true in general if $f(\cdot,u)$ has vertical components and $G$ is non-Abelian.
  To illustrate it, consider the extreme case where $M = G$, a non-abelian Lie group.
  Then the system is a left-invariant control system on $G$ (see Section~\ref{ssec:ControlSystemsOnLieGroups}); but then $\mathfrak{g}_{M} = TG$ and $f(x,u) \in TG$ and so Eq.~\eqref{eq:SymmetryInBlSc2000} does not hold except for the very special case where $f(\cdot,u)$ commutes with every vector field on $G$.
\end{remark}

\subsection{Symmetry in affine control systems}
\label{ssec:SymmetryInAffineControlSystems}
Consider an {\em affine control system}, i.e., Eq.~\eqref{eq:NonlinearControlSystem} with
\begin{equation}
  \label{eq:f-affine}
  f(x, u) = X_{0}(x) + \sum_{i=1}^{d} u_{i} X_{i}(x),
\end{equation}
where the control vector fields $\{ X_{i} \}_{i=1}^{d}$ are linearly independent on $M$.
Let $\mathcal{D} \subset TM$ be the distribution defined by
\begin{equation}
  \label{eq:mathcalD}
  \mathcal{D} \defeq \Span\{X_{1}, \dots, X_{d}\}.
\end{equation}
We assume that the vector field $X_{0}$ is $G$-invariant, i.e., for any $g \in G$,
\begin{equation}
  \label{eq:X_0-symmetry}
  T\Phi_{g} \circ X_{0} = X_{0} \circ \Phi_{g},
\end{equation}
and also that the distribution is invariant under the tangent lift of the $G$-action on $Q$, i.e.,
\begin{equation}
  \label{eq:mathcalD-symmetry}
  T\Phi_{g}(\mathcal{D}) = \mathcal{D}
\end{equation}
for any $g \in G$.  This implies that, for each vector field $X_{i}$ for $i = 1, \dots, d$ and any $x \in M$ and $g \in G$, we have
\begin{equation}
  \label{eq:R}
  T_{x}\Phi_{g}\parentheses{ X_{i}(x) } = \sum_{j=1}^{d} R_{i}^{j}(x,g)\,X_{j}(g x),
\end{equation}
where $R(x,g)$ is an invertible $d \times d$ matrix.
This gives rise to an action of $G$ on $E = M \times \R^{d}$, i.e., $\Psi_{g}: E \to E$ defined by
\begin{equation*}
  \Psi_{g}: (x, u) \mapsto \parentheses{ g x, R^{T}(x,g) u }.
\end{equation*}
Then the $G$-symmetry of $X_{0}$ and $\mathcal{D}$, i.e., Eqs.~\eqref{eq:X_0-symmetry} and \eqref{eq:mathcalD-symmetry}, implies that of $f$, i.e.,
\begin{equation*}
  T_{x}\Phi_{g}(f(x,u)) = f \circ \Psi_{g}(x, u).
\end{equation*}
In particular, consider the case where $R(x,g)$ has no dependence on $x$, i.e., $R(x,g) = R(g)$; this is the case if, for example, $M$ is a vector space and the action $\Phi_{g}: M \to M$ is linear.
Then the matrix $R^{T}(g)$ gives the representation $\sigma_{(\cdot)}: G \to GL(d, \R)$, i.e., $\sigma_{g} = R^{T}(g)$.

\subsection{Reduced Control System}
The equivariance of the map $f$ shown above gives rise to the map $\bar{f}: E/G \to TM/G$ defined so that the diagram
\begin{equation*}
  \xymatrix@!0@R=0.55in@C=.8in{
    E \ar[r]^{f} \ar[d]_{\pi^{E}_{G}} & TM \ar[d]^{\pi^{TM}_{G}}
    \\
    E/G \ar[r]_{\bar{f}} & TM/G
  }
\end{equation*}
commutes, where $\pi^{E}_{G}: E \to E/G$ and $\pi^{TM}_{G}: TM \to TM/G$ are both quotient maps.
Then the map $\bar{f}$ defines the reduced control system.

Since $E = M \times \R^{d}$, the quotient $E/G$ defines the associated bundle
\begin{equation*}
  E/G = (M \times \R^{d})/G = M \times_{G} \R^{d},
\end{equation*}
which is a vector bundle over $M/G$ (see, e.g., \citet[Section~2.3]{CeMaRa2001}).
On the other hand, again following \citet[Section~2.3]{CeMaRa2001}, the quotient $TM/G$ is identified with $T(M/G) \oplus \tilde{\mathfrak{g}}$, where $\tilde{\mathfrak{g}}$ is the associated bundle defined as
\begin{equation*}
  \tilde{\mathfrak{g}} \defeq M \times_{G} \mathfrak{g} = (M \times \mathfrak{g})/G
\end{equation*}
with $\mathfrak{g}$ being the Lie algebra of the Lie group $G$.
More specifically, given a principal bundle connection form
\begin{equation}
  \label{eq:mathcalA}
  \mathcal{A}: TM \to \mathfrak{g},
\end{equation}
we have the identification~(see \cite[Section~2.4]{CeMaRa2001})
\begin{equation}
  \label{eq:alpha_mathcalA}
  \alpha_{\mathcal{A}}: TM/G \to T(M/G) \oplus \tilde{\mathfrak{g}};
  \qquad
  [v_{x}]_{G} \mapsto T_{x}\pi(v_{x}) \oplus [ x, \mathcal{A}_{x}(v_{x}) ]_{G},
\end{equation}
where $[\,\cdot\,]_{G}$ stands for an equivalence class defined by the $G$-action.
Therefore, we may introduce the maps $\bar{f}_{M/G}: E/G \to T(M/G)$ and $\bar{f}_{\tilde{\mathfrak{g}}}: E/G \to \tilde{\mathfrak{g}}$ defined by
\begin{equation*}
  \bar{f}_{M/G}([x, u]_{G}) \defeq T_{x}\pi \circ f(x, u),
  \qquad
  \bar{f}_{\tilde{\mathfrak{g}}}([x, u]_{G}) \defeq [x, \mathcal{A}_{x}(f(x,u)) ]_{G}
\end{equation*}
for any element $[x, u]_{G} \in E/G = M \times_{G} \R^{d}$; these maps are clearly well-defined because of the equivariance of $f$.
Then we have
\begin{equation*}
  \alpha_{\mathcal{A}} \circ \bar{f} = \bar{f}_{M/G} \oplus \bar{f}_{\tilde{\mathfrak{g}}},
\end{equation*}
and thus the reduced system is decoupled into two subsystems:
\begin{equation}
  \label{eq:ReducedControlSystem}
  \dot{\bar{x}} = \bar{f}_{M/G}(\bar{u}_{\bar{x}}),
  \qquad
  \tilde{\xi}_{\bar{x}} = \bar{f}_{\tilde{\mathfrak{g}}}(\bar{u}_{\bar{x}}),
\end{equation}
where $\bar{x} \defeq \pi(x)$, $\bar{u}_{\bar{x}} \defeq [x, u]_{G}$, and $\tilde{\xi}_{\bar{x}} \defeq [x, \mathcal{A}_{x}(\dot{x}) ]_{G}$.

\section{Symmetry and Reduction of Optimal Control Systems}
\label{sec:SymRedInOptimalControlSystems}
This section first summarizes the fact that the $G$-symmetry of a nonlinear control system implies that of the corresponding optimal control system if the cost function is also $G$-invariant.
We note that similar results are briefly discussed in \citet{GrMa1984}.
We then show how a Poisson reduction may be applied to reduce the optimal control system with symmetry.

\subsection{Pontryagin Maximum Principle and Symmetry in Optimal Control}
\label{ssec:PMPandSymmetryInOptimalControl}
Given a cost function $C: E \to \R$ and fixed times $t_{0}$ and $t_{1}$ such that $t_{0} < t_{1}$, define the cost functional
\begin{equation*}
  J \defeq \int_{t_{0}}^{t_{1}} C(x(t), u(t))\,dt.
\end{equation*}
Let $x_{0}$ and $x_{1}$ be fixed in $M$.
Then we formulate an {\em optimal control problem} as follows:
Minimize the cost functional, i.e., 
\begin{equation*}
  \min_{u(\cdot)} J
  = 
  \min_{u(\cdot)} \int_{t_{0}}^{t_{1}} C(x(t), u(t))\,dt,
\end{equation*}
subject to Eq.~\eqref{eq:NonlinearControlSystem}, i.e., $\dot{x} = f(x, u)$, and the endpoint constraints $x(t_{0}) = x_{0}$ and $x(t_{1}) = x_{1}$.

A Hamiltonian structure comes into play with the introduction of the augmented cost functional: Let us introduce the costate $\lambda(t) \in T^{*}M$ and define
\begin{align*}
  \hat{S} &\defeq \int_{t_{0}}^{t_{1}} \brackets{ C(x(t), u(t)) + \ip{ \lambda(t) }{ \dot{x}(t) - f(x(t), u(t)) } } dt
  \\
  &= \int_{t_{0}}^{t_{1}} \brackets{ \ip{ \lambda(t) }{ \dot{x}(t) } - \hat{H}(x(t), \lambda(t), u(t)) } dt
\end{align*}
with the {\em control Hamiltonian}:
\begin{equation*}
  \hat{H}: T^{*}M \oplus E \to \R;
  \quad
  \hat{H}(\lambda_{x} , u_{x}) = \hat{H}(x, \lambda , u) \defeq \ip{\lambda_{x}}{f(u_{x})} - C(u_{x}),
\end{equation*}
where we wrote $\lambda_{x} \defeq (x, \lambda) \in T_{x}^{*}M$ and $u_{x} \defeq (x, u) \in E_{x}$ (recall that $E = M \times \R^{d}$ is a trivial vector bundle over $M$).
If the cost function is invariant under the $G$-action $\Psi$ defined in Eq.~\eqref{eq:Psi}, i.e., for any $g \in G$, 
\begin{equation}
  \label{eq:C-symmetry}
  C \circ \Psi_{g} = C,
\end{equation}
then the control Hamiltonian $\hat{H}$ has a symmetry in the following sense:
Define an action of $G$ on the bundle $T^{*}M \oplus E$ by, for any $g \in G$, 
\begin{equation*}
  \hat{\Psi}_{g}: T^{*}M \oplus E \to T^{*}M \oplus E;
  \quad
  (\lambda_{x} , u_{x}) \mapsto \parentheses{ T^{*}\Phi_{g^{-1}}(\lambda_{x}) , \Psi_{g}(u_{x}) },
\end{equation*}
where $T^{*}\Phi_{g^{-1}}: T^{*}M \to T^{*}M$ is the cotangent lift of $\Phi_{g}$.
Then it is easy to show that the control Hamiltonian $\hat{H}$ is invariant under the $G$-action defined above, i.e.,
\begin{equation}
  \label{eq:hatH-symmetry}
  \hat{H} \circ \hat{\Psi}_{g} = \hat{H}
\end{equation}
for any $g \in G$.

Now, for an arbitrary fixed $\lambda_{x} \in T_{x}^{*}M$, define $\F_{\rm c}\hat{H}(\lambda_{x}, \,\cdot\,): E_{x} \to E_{x}^{*}$ as follows: For any $w_{x} \in E_{x}$, 
\begin{equation*}
  \ip{ \F_{\rm c}\hat{H}(\lambda_{x}, u_{x}) }{ w_{x} } = \left.\od{}{\varepsilon} \hat{H}(\lambda_{x}, u_{x} + \varepsilon\,w_{x}) \right|_{\varepsilon=0},
\end{equation*}
where $\ip{\,\cdot\,}{\,\cdot\,}$ on the left-hand side is the natural pairing between elements in $E_{x}^{*}$ and $E_{x}$.
We assume that the optimal control $u^{\star}_{x}: T_{x}^{*}M \to E_{x} \cong \R^{d}$ is uniquely determined by the equation
\begin{equation*}
   \F_{\rm c}\hat{H}\parentheses{ \lambda_{x}, u^{\star}_{x}(\lambda_{x}) } = 0
\end{equation*}
for any $\lambda_{x} \in T_{x}^{*}M$.
This gives rise to the fiber-preserving bundle map
\begin{equation*}
  u^{\star}: T^{*}M \to E;
  \quad
  \lambda_{x} \mapsto u^{\star}_{x}(\lambda_{x}).
\end{equation*}
Then one may show that the optimal control $u^{\star}: T^{*}M \to E$ is equivariant under the $G$-actions, i.e.,
\begin{equation}
  \label{eq:u^star-symmetry}
  \Psi_{g} \circ u^{\star} = u^{\star} \circ T^{*}\Phi_{g^{-1}},
\end{equation}
and so we may define the optimal Hamiltonian $H: T^{*}M \to \R$ by $H \defeq \hat{H} \circ u^{\star}$, or more explicitly,
\begin{equation}
  \label{eq:H}
  H(\lambda_{x}) \defeq \hat{H}( \lambda_{x} , u^{\star}_{x}(\lambda_{x}) )
  = \ip{ \lambda_{x} }{ f(u^{\star}_{x}(\lambda_{x})) } - C(u^{\star}_{x}(\lambda_{x})).
\end{equation}
Then the symmetry of the control Hamiltonian and the optimal control, i.e., Eq.~\eqref{eq:hatH-symmetry} and \eqref{eq:u^star-symmetry}, imply that of the optimal Hamiltonian $H$, i.e., 
\begin{equation*}
  H \circ T^{*}\Phi_{g^{-1}} = H
\end{equation*}
for any $g \in G$.

The Pontryagin maximum principle says that the optimal flow on $M$ of the control system~\eqref{eq:NonlinearControlSystem} is necessarily the projection to $M$ of the Hamiltonian flow on $T^{*}M$ with the optimal Hamiltonian $H$ defined above.
Specifically, let $\Omega$ be the standard symplectic form on $T^{*}M$, $\pi_{M}: T^{*}M \to M$ the cotangent bundle projection, and $X_{H}$ the Hamiltonian vector field defined by
\begin{equation}
  \label{eq:HamiltonianSystem}
  i_{X_{H}}\Omega = dH;
\end{equation}
then there exists a solution $\lambda: [t_{0},t_{1}] \to T^{*}M$ of the above Hamiltonian system with $\pi_{M}(\lambda(t_{0})) = x_{0}$ and $\pi_{M}(\lambda(t_{1})) = x_{1}$ such that its projection to $M$, $\pi_{M} \circ \lambda: [t_{0},t_{1}] \to M$, is the optimal trajectory of the control system (see, e.g., \citet[Chapter~12]{AgSa2004} for more details).
In other words, the optimal flow on $M$ of the control system is given by the vector field $T\pi_{M}(X_{H})$ on $M$.

\subsection{Poisson Reduction and Hamilton--Poincar\'e Equations}
\label{ssec:PoissonRedAndHPEq}
We saw that the optimal Hamiltonian $H$ is $G$-invariant; this implies that we can apply the results of symmetry reduction of Hamiltonian systems to Eq.~\eqref{eq:HamiltonianSystem} to obtain a reduced Hamiltonian system related to the optimal flow.
Such reduction is helpful in practical applications, since it helps one to reduce the number of unknowns in the Hamiltonian system \eqref{eq:HamiltonianSystem}.

Reduction of Hamiltonian systems is a well-developed subject, whose roots go back to the symplectic reduction of \citet{MaWe1974}; there have been substantial subsequent developments (see \citet{MaMiOrPeRa2007} and references therein).
In our case, the Poisson version of the cotangent bundle reduction (see \citet{CeMaPeRa2003} and \citet[Section~2.3]{MaMiOrPeRa2007}; see also \citet{MoMaRa1984} and \citet{Mo1986}) turns out to be a natural choice for the following reason: Recall that we derived the reduced control system~\eqref{eq:ReducedControlSystem} on the quotient configuration space $M/G$ using the bundle $T(M/G) \oplus \tilde{\mathfrak{g}}$ over $M/G$.
{\em It is natural to expect and also is desirable that the maximum principle, originally formulated on $T^{*}M$, reduces to the dual $T^{*}(M/G) \oplus \tilde{\mathfrak{g}}^{*}$, which is also a bundle over $M/G$; then we may consider it a reduced version of the maximum principle (see the dashed arrow in Fig.~\ref{fig:RelatingOCPs})}.
The Poisson version of the cotangent bundle reduction works precisely this way: The Poisson structure on $T^{*}M$ reduces to that on $T^{*}M/G \cong T^{*}(M/G) \oplus \tilde{\mathfrak{g}}^{*}$; accordingly, Hamilton's equations reduce to the Hamilton--Poincar\'e equations~\cite{CeMaPeRa2003} defined of $T^{*}(M/G) \oplus \tilde{\mathfrak{g}}^{*}$.

As shown in \citet[Lemma~2.3.3 on p.~74]{MaMiOrPeRa2007}, the identification of $T^{*}M$ with $T^{*}(M/G) \oplus \tilde{\mathfrak{g}}^{*}$ is provided by the dual of the inverse of $\alpha_{\mathcal{A}}$ defined in Eq.~\eqref{eq:alpha_mathcalA}:
\begin{equation}
  \label{eq:alpha_mathcalA^-1^star}
  (\alpha_{\mathcal{A}}^{-1})^{*}: T^{*}M/G \to T^{*}(M/G) \oplus \tilde{\mathfrak{g}}^{*};
  \quad
  [\lambda_{x}]_{G} \mapsto \hl_{x}^{*}(\lambda_{x}) \oplus [x, {\bf J}(\lambda_{x})]_{G},
\end{equation}
where $\hl_{x}^{*}: T^{*}_{x}M \to T^{*}_{\bar{x}}(M/G)$ is the adjoint of the horizontal lift $\hl_{x}: T_{\bar{x}}(M/G) \to T_{x}M$ associated with the connection form $\mathcal{A}: TM \to \mathfrak{g}$, and ${\bf J}: T^{*}M \to \mathfrak{g}^{*}$ is the momentum map corresponding to the $G$-symmetry: Let $\xi$ be an arbitrary element in $\mathfrak{g}$ and $\xi_{M} \in \mathfrak{X}(M)$ its infinitesimal generator; then ${\bf J}$ is defined by
\begin{equation}
  \label{eq:J}
  \ip{ {\bf J}(\lambda_{x}) }{ \xi } = \ip{ \lambda_{x} }{ \xi_{M}(x) }.
\end{equation}
Recall from, e.g., \citet[Section~11.4]{MaRa1999} that Noether's theorem says that a $G$-invariance of $H$ implies that ${\bf J}$ is conserved along the flow of the Hamiltonian vector field $X_{H}$.
We note that \citet{Su1995} formulated a generalized version of Noether's theorem for optimal control systems that does not require some of the assumptions we made here; however the original one suffices for our purpose here.

\citet{CeMaPeRa2003} exploit this identification to reduce the Hamiltonian dynamics with a $G$-invariant Hamiltonian $H: T^{*}M \to \R$ as follows:
The $G$-invariance implies that one can define the reduced Hamiltonian on $T^{*}M/G$, which is identified with $T^{*}(M/G) \oplus \tilde{\mathfrak{g}}^{*}$ by Eq.~\eqref{eq:alpha_mathcalA^-1^star}, i.e., one has $\bar{H}: T^{*}(M/G) \oplus \tilde{\mathfrak{g}}^{*} \to \R$.
Then, through the reduction of Hamilton's phase space principle, i.e.,
\begin{equation*}
  \delta \int_{t_{0}}^{t_{1}} \brackets{ \ip{p}{\dot{q}} - H(q,p) } dt = 0
\end{equation*}
with $\delta q(t_{0}) = \delta q(t_{1}) = 0$, one obtains the Hamilton--Poincar\'e equations defined on $T^{*}(M/G) \oplus \tilde{\mathfrak{g}}^{*}$:
\begin{equation}
  \label{eq:Hamilton-Poincare}
  \begin{array}{c}
    \dot{\bar{x}} = \pd{\bar{H}}{\bar{\lambda}},
    \qquad
    \tilde{\xi} = \pd{\bar{H}}{\tilde{\mu}},
    \bigskip\\
    \covd{\bar{\lambda}}{t} = -\pd{\bar{H}}{\bar{x}} - \ip{ \tilde{\mu} }{ i_{\dot{\bar{x}}}\tilde{\mathcal{B}} },
    \qquad
    \covd{\tilde{\mu}}{t} = \ad^{*}_{\tilde{\xi}}\tilde{\mu},
  \end{array}
\end{equation}
where $\bar{\lambda} \oplus \tilde{\mu}$ is an element in $T^{*}(M/G) \oplus \tilde{\mathfrak{g}}^{*}$; $\tcovd{}{t}$ is the covariant derivative in the associated bundle (see \citet[Section~2.3]{CeMaRa2001} and \citet{CeMaPeRa2003}); $\tilde{\mathcal{B}}$ is the reduced curvature form defined as follows (see \citet[Lemma~4.5]{CeMaPeRa2003}):
Let $\hor_{x}: T_{x}M \to T_{x}M$ be the horizontal component defined by the connection form $\mathcal{A}$:
\begin{equation*}
  \hor_{x}(\mathcal{X}_{x}) = \mathcal{X}_{x} - (\mathcal{A}_{x}(\mathcal{X}_{x}))_{M}(x),
\end{equation*}
where $(\,\cdot\,)_{M}: \mathfrak{g} \to \mathfrak{X}(M)$ is the infinitesimal generator.
Also, let $\mathcal{B}$ be the curvature of the connection form $\mathcal{A}: TM \to \mathfrak{g}$, i.e., it is the $\mathfrak{g}$-valued two-form on $M$ defined by
\begin{equation*}
  \mathcal{B}_{x}(\mathcal{X}_{x}, \mathcal{Y}_{x}) = d\mathcal{A}_{x}(\hor_{x}(\mathcal{X}_{x}), \hor_{x}(\mathcal{Y}_{x})).
\end{equation*}
Then the reduced curvature form $\tilde{\mathcal{B}}$ is the $\tilde{\mathfrak{g}}$-valued two-form on $M/G$ defined by
\begin{equation*}
  \tilde{\mathcal{B}}_{\bar{x}}(X_{\bar{x}}, Y_{\bar{x}}) = [x, \mathcal{B}_{x}(\mathcal{X}_{x}, \mathcal{Y}_{x})]_{G}
\end{equation*}
for any $X_{\bar{x}}, Y_{\bar{x}} \in T_{\bar{x}}(M/G)$ and $\mathcal{X}_{x}, \mathcal{Y}_{x} \in T_{x}M$ such that $T_{x}\pi(\mathcal{X}_{x}) = X_{\bar{x}}$ and $T_{x}\pi(\mathcal{Y}_{x}) = Y_{\bar{x}}$.
In coordinates (see \citet[Section~4]{CeMaRa2001b} for details), Eq.~\eqref{eq:Hamilton-Poincare} becomes
\begin{equation*}
  \begin{array}{c}
    \dot{\bar{x}}^{\alpha} = \pd{\bar{H}}{\bar{\lambda}^{\alpha}},
    \qquad
    \tilde{\xi}^{a} = \pd{\bar{H}}{\tilde{\mu}_{a}},
    \bigskip\\
    \dot{\bar{\lambda}}_{\alpha} = -\pd{\bar{H}}{\bar{x}^{\alpha}}
    - \tilde{\mu}_{a} \parentheses{
      \mathcal{B}^{a}_{\beta \alpha} \dot{\bar{x}}^{\beta} + \mathcal{A}^{b}_{\alpha} C^{a}_{d b} \pd{\bar{H}}{\tilde{\mu}_{d}}
    },
    \qquad
    \dot{\tilde{\mu}}_{a} = \tilde{\mu}_{b} C^{b}_{d a} \parentheses{ \pd{\bar{H}}{\tilde{\mu}_{d}} - \mathcal{A}^{d}_{\alpha} \dot{\bar{x}}^{\alpha} },
  \end{array}
\end{equation*}
where $\tilde{\xi}^{a}$ and $\tilde{\mu}_{a}$ are the {\em locked body angular velocity} and its corresponding momentum (see \citet[Section~5.3]{BlKrMaMu1996}) defined by
\begin{equation*}
  \tilde{\xi}^{a} = \xi^{a} + \mathcal{A}^{a}_{\alpha} \dot{\bar{x}}^{\alpha}
  = \parentheses{ \Ad_{g^{-1}}\mathcal{A}_{(\bar{x}, g)}(\dot{\bar{x}}, \dot{g}) }^{a},
  \qquad
  \tilde{\mu}_{a} = \parentheses{ \Ad_{g}^{*} {\bf J}(\lambda_{x}) }_{a}.
\end{equation*}
with $\xi = T_{g}L_{g^{-1}}(\dot{g})$; the coefficients $\mathcal{A}^{a}_{\alpha}$ are defined in the coordinate expression for the connection form $\mathcal{A}$ as follows:
\begin{equation*}
  \mathcal{A}_{(\bar{x}, g)}(\dot{\bar{x}}, \dot{g}) = \Ad_{g}(\xi^{a} + \mathcal{A}^{a}_{\alpha} \dot{\bar{x}}^{\alpha})\, {\bf e}_{a},
\end{equation*}
where $\{ {\bf e}_{a} \}_{a=1}^{\dim G}$ is a basis for the Lie algebra $\mathfrak{g}$.
Also the coefficients $\mathcal{B}^{a}_{\beta\alpha}$ for the curvature are given by
\begin{equation*}
  \mathcal{B}^{a}_{\beta\alpha} = \pd{\mathcal{A}^{a}}{\bar{x}^{\alpha}} - \pd{\mathcal{A}^{a}}{\bar{x}^{\beta}} - C^{a}_{b c} \mathcal{A}^{b}_{\alpha} \mathcal{A}^{c}_{\beta}.
\end{equation*}

\subsection{Poisson Reduction of Pontryagin Maximum Principle}
\label{ssec:PoissonRedOfPMP}
Let us apply the above Poisson reduction to the Hamiltonian system~\eqref{eq:HamiltonianSystem} defined by the maximum principle.
First calculate the reduced optimal Hamiltonian $\bar{H}$ corresponding to the optimal Hamiltonian~\eqref{eq:H}.
Using the identification in Eq.~\eqref{eq:alpha_mathcalA^-1^star} and also the reduced optimal control
\begin{equation*}
  \bar{u}^{\star}: T^{*}M/G \to E/G,
\end{equation*}
which is well-defined due to Eq.~\eqref{eq:u^star-symmetry}, we can rewrite the Hamiltonian $H$ as follows:
\begin{align*}
  H(\lambda_{x}) &= \ip{ (\alpha_{\mathcal{A}}^{-1})^{*}(\lambda_{x}) }{\, \alpha_{\mathcal{A}} \circ f(u^{\star}_{x}(\lambda_{x})) } - C(u^{\star}_{x}(\lambda_{x}))
  \\
  &= \ip{ \hl_{x}^{*}(\lambda_{x}) }{\, \bar{f}_{M/G}^{\star}([\lambda_{x}]_{G}) }
  + \ip{ [x, {\bf J}(\lambda_{x})]_{G} }{\, \bar{f}_{\tilde{\mathfrak{g}}}^{\star}([\lambda_{x}]_{G}) }
  - \bar{C}^{\star}([\lambda_{x}]_{G}),
\end{align*}
where we defined the reduced cost function $\bar{C}: E/G \to \R$ by $\bar{C} \circ \pi^{E}_{G} = C$ and also
\begin{equation*}
  \bar{f}_{M/G}^{\star}: T^{*}M/G \to T(M/G),
  \qquad
  \bar{f}_{\tilde{\mathfrak{g}}}^{\star}: T^{*}M/G \to \tilde{\mathfrak{g}},
  \qquad
  \bar{C}^{\star}: T^{*}M/G \to \R
\end{equation*}
by
\begin{equation*}
  \begin{array}{c}
    \DS \bar{f}_{M/G}^{\star}([\lambda_{x}]_{G}) \defeq \bar{f}_{M/G} \circ \bar{u}^{\star}_{\bar{x}}([\lambda_{x}]_{G}),
    \qquad
    \bar{f}_{\tilde{\mathfrak{g}}}^{\star}([\lambda_{x}]_{G}) \defeq \bar{f}_{\tilde{\mathfrak{g}}} \circ \bar{u}^{\star}_{\bar{x}}([\lambda_{x}]_{G}),
    \medskip\\
    \bar{C}^{\star}([\lambda_{x}]_{G}) \defeq \bar{C} \circ \bar{u}^{\star}_{\bar{x}}([\lambda_{x}]_{G}).
  \end{array}
\end{equation*}
Define the reduced optimal Hamiltonian $\bar{H}: T^{*}(M/G) \oplus \tilde{\mathfrak{g}}^{*} \to \R$ by
\begin{equation}
  \label{eq:barH}
  \bar{H}\parentheses{ \bar{\lambda}_{\bar{x}} \oplus \tilde{\mu}_{\bar{x}} }
  \defeq \ip{ \bar{\lambda}_{\bar{x}} }{ \bar{f}_{M/G}^{\star}\parentheses{ \bar{\lambda}_{\bar{x}} \oplus \tilde{\mu}_{\bar{x}} } }
  + \ip{ \tilde{\mu}_{\bar{x}} }{ \bar{f}_{\tilde{\mathfrak{g}}}^{\star}\parentheses{ \bar{\lambda}_{\bar{x}} \oplus \tilde{\mu}_{\bar{x}} } }
  - \bar{C}^{\star}\parentheses{ \bar{\lambda}_{\bar{x}} \oplus \tilde{\mu}_{\bar{x}} },
\end{equation}
where we identified $T^{*}M/G$ with $T^{*}(M/G) \oplus \tilde{\mathfrak{g}}^{*}$ as the domain of the maps $\bar{f}_{M/G}^{\star}$, $\bar{f}_{\tilde{\mathfrak{g}}}^{\star}$, and $\bar{C}^{\star}$.
Then we have $H(\lambda_{x}) = \bar{H}( \bar{\lambda}_{\bar{x}} \oplus \tilde{\mu}_{\bar{x}} )$ with
\begin{equation*}
  \bar{\lambda}_{\bar{x}} \defeq \hl_{x}^{*}(\lambda_{x}),
  \qquad
  \tilde{\mu}_{\bar{x}} \defeq [x, {\bf J}(\lambda_{x})]_{G}.
\end{equation*}
In coordinates, the reduced optimal Hamiltonian is
\begin{equation*}
  \bar{H}\parentheses{\bar{x}, \bar{\lambda}, \tilde{\mu}} = \bar{\lambda}_{\alpha}\, \bar{f}_{M/G}^{\star,\alpha}\parentheses{\bar{x}, \bar{\lambda}, \tilde{\mu}}
  + \tilde{\mu}_{a}\, \bar{f}_{\tilde{\mathfrak{g}}}^{\star,a}\parentheses{\bar{x}, \bar{\lambda}, \tilde{\mu}}
  - \bar{C}^{\star}\parentheses{\bar{x}, \bar{\lambda}, \tilde{\mu}}.
\end{equation*}

Applying the Hamilton--Poincar\'e equations~\eqref{eq:Hamilton-Poincare} of \citet{CeMaPeRa2003} to this particular choice of $\bar{H}$ gives the following:
\begin{theorem}
  Suppose that the nonlinear control system~\eqref{eq:NonlinearControlSystem} and the cost function have $G$-symmetries in the sense of Eqs.~\eqref{eq:f-symmetry} and \eqref{eq:C-symmetry}.
  Then the necessary condition of the Pontryagin maximum principle reduces to the following set of equations:
  \begin{equation}
    \label{eq:Control_Hamilton-Poincare}
    \begin{array}{c}
      \dot{\bar{x}} = \bar{f}_{M/G}^{\star}\parentheses{ \bar{\lambda}_{\bar{x}} \oplus \tilde{\mu}_{\bar{x}} },
      \qquad
      \tilde{\xi} = \bar{f}_{\tilde{\mathfrak{g}}}^{\star}\parentheses{ \bar{\lambda}_{\bar{x}} \oplus \tilde{\mu}_{\bar{x}} },
      \bigskip\\
      \covd{\bar{\lambda}}{t} = -\pd{\bar{H}}{\bar{x}} - \ip{ \tilde{\mu} }{ i_{\dot{\bar{x}}}\tilde{\mathcal{B}} },
      \qquad
      \covd{\tilde{\mu}}{t} = \ad^{*}_{\tilde{\xi}}\tilde{\mu},
    \end{array}
  \end{equation}
  or, in coordinates,
  \begin{equation}
    \label{eq:Control_Hamilton-Poincare-coordinates}
    \begin{array}{c}
      \dot{\bar{x}}^{\alpha} = \bar{f}_{M/G}^{\star,\alpha}\parentheses{\bar{x}, \bar{\lambda}, \tilde{\mu}},
      \qquad
      \tilde{\xi}^{a} = \bar{f}_{\tilde{\mathfrak{g}}}^{\star,a}\parentheses{\bar{x}, \bar{\lambda}, \tilde{\mu}},
      \bigskip\\
      \dot{\bar{\lambda}}_{\alpha} = -\pd{\bar{H}}{\bar{x}^{\alpha}}
      - \tilde{\mu}_{a} \parentheses{
        \mathcal{B}^{a}_{\beta \alpha} \dot{\bar{x}}^{\beta} + \mathcal{A}^{b}_{\alpha} C^{a}_{d b} \bar{f}_{\tilde{\mathfrak{g}}}^{\star,d}\parentheses{\bar{x}, \bar{\lambda}, \tilde{\mu}}
      },
      \qquad
      \dot{\tilde{\mu}}_{a} = \tilde{\mu}_{b} C^{b}_{d a} \parentheses{ \bar{f}_{\tilde{\mathfrak{g}}}^{\star,d}\parentheses{\bar{x}, \bar{\lambda}, \tilde{\mu}} - \mathcal{A}^{d}_{\alpha} \dot{\bar{x}}^{\alpha} }.
    \end{array}
  \end{equation}
\end{theorem}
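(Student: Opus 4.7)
The plan is to apply the Hamilton--Poincar\'e equations~\eqref{eq:Hamilton-Poincare} of Cendra--Marsden--Pekarsky--Ratiu directly to the reduced optimal Hamiltonian $\bar{H}$ defined in~\eqref{eq:barH}, relying on work already done in Sections~\ref{ssec:PMPandSymmetryInOptimalControl}--\ref{ssec:PoissonRedAndHPEq}. Concretely, the $G$-invariance of $H$ (already established from Eqs.~\eqref{eq:hatH-symmetry} and~\eqref{eq:u^star-symmetry}) together with the identification $T^{*}M/G \cong T^{*}(M/G) \oplus \tilde{\mathfrak{g}}^{*}$ provided by~\eqref{eq:alpha_mathcalA^-1^star} licenses the use of~\eqref{eq:Hamilton-Poincare}, so the only real task is to verify that, with $\bar{H}$ given by~\eqref{eq:barH}, the first two equations in~\eqref{eq:Hamilton-Poincare} reduce precisely to
\[
  \dot{\bar{x}} = \bar{f}_{M/G}^{\star}\parentheses{ \bar{\lambda}_{\bar{x}} \oplus \tilde{\mu}_{\bar{x}} },
  \qquad
  \tilde{\xi} = \bar{f}_{\tilde{\mathfrak{g}}}^{\star}\parentheses{ \bar{\lambda}_{\bar{x}} \oplus \tilde{\mu}_{\bar{x}} }.
\]
Once this identification of the first two equations is obtained, the remaining two equations in~\eqref{eq:Control_Hamilton-Poincare} are immediate from~\eqref{eq:Hamilton-Poincare} after substituting these expressions for $\dot{\bar{x}}$ and $\tilde{\xi}$; the coordinate version~\eqref{eq:Control_Hamilton-Poincare-coordinates} then follows by the same substitution into the coordinate form of the Hamilton--Poincar\'e equations.

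The substantive step is therefore to compute $\partial\bar{H}/\partial\bar{\lambda}$ and $\partial\bar{H}/\partial\tilde{\mu}$ from~\eqref{eq:barH}. The main technical nuance is that each of $\bar{f}_{M/G}^{\star}$, $\bar{f}_{\tilde{\mathfrak{g}}}^{\star}$, and $\bar{C}^{\star}$ depends on $(\bar{\lambda}, \tilde{\mu})$ through the reduced optimal control $\bar{u}^{\star}$, so at first glance naive differentiation produces several extra chain-rule terms. I expect this to be the one potential pitfall in the calculation. The resolution is an envelope-type argument: by the defining equation $\F_{\rm c}\hat{H}(\lambda_{x}, u^{\star}_{x}(\lambda_{x})) = 0$, the partial derivative of $\hat{H}(\lambda_x, u)$ with respect to $u$ vanishes at $u = u^{\star}_{x}(\lambda_{x})$. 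Passing this identity through the reduction (using that $\bar{u}^{\star}$ is well-defined by~\eqref{eq:u^star-symmetry}) shows that all terms proportional to $\partial\bar{u}^{\star}/\partial\bar{\lambda}$ and $\partial\bar{u}^{\star}/\partial\tilde{\mu}$ in $\partial\bar{H}/\partial\bar{\lambda}$ and $\partial\bar{H}/\partial\tilde{\mu}$ cancel.

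After this cancellation, only the explicit $\bar{\lambda}$- and $\tilde{\mu}$-dependence in~\eqref{eq:barH} contributes, yielding $\partial\bar{H}/\partial\bar{\lambda} = \bar{f}_{M/G}^{\star}$ and $\partial\bar{H}/\partial\tilde{\mu} = \bar{f}_{\tilde{\mathfrak{g}}}^{\star}$. Combining this with the Hamilton--Poincar\'e equations~\eqref{eq:Hamilton-Poincare} (and their coordinate expression from~\cite{CeMaRa2001b}), we obtain both~\eqref{eq:Control_Hamilton-Poincare} and~\eqref{eq:Control_Hamilton-Poincare-coordinates}, completing the proof. Apart from the envelope argument, every ingredient is either recalled from~\cite{CeMaPeRa2003} or already established earlier in the section, so the proof is short.
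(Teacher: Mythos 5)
Your proposal is correct and follows essentially the same route as the paper, which simply substitutes the reduced optimal Hamiltonian $\bar{H}$ of~\eqref{eq:barH} into the Hamilton--Poincar\'e equations~\eqref{eq:Hamilton-Poincare}; the envelope argument you spell out (using $\F_{\rm c}\hat{H}(\lambda_{x}, u^{\star}_{x}(\lambda_{x})) = 0$ to kill the chain-rule terms through $\bar{u}^{\star}$) is exactly the step the paper leaves implicit, and it is the right justification for $\partial\bar{H}/\partial\bar{\lambda} = \bar{f}_{M/G}^{\star}$ and $\partial\bar{H}/\partial\tilde{\mu} = \bar{f}_{\tilde{\mathfrak{g}}}^{\star}$.
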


\begin{remark}
  Notice that the equations for $(\bar{x}, \bar{\lambda}, \tilde{\mu})$ are decoupled from the second one.
  Thus one first solves this subsystem and then solve the second equation to reconstruct the dynamics in the group variables.
\end{remark}

\begin{remark}
\label{remark:AbelianCase}
  If the Lie group $G$ is Abelian, then the structure constants $C^{a}_{bc}$ vanish, and thus we have
  \begin{equation}
    \label{eq:Control_Hamilton-Poincare-coordinates-Abelian}
    \begin{array}{c}
      \dot{\bar{x}}^{\alpha} = \bar{f}_{M/G}^{\star,\alpha}\parentheses{\bar{x}, \bar{\lambda}, \tilde{\mu}},
      \qquad
      \tilde{\xi}^{a} = \bar{f}_{\tilde{\mathfrak{g}}}^{\star,a}\parentheses{\bar{x}, \bar{\lambda}, \tilde{\mu}},
      \bigskip\\
      \dot{\bar{\lambda}}_{\alpha} = -\pd{\bar{H}}{\bar{x}^{\alpha}}
      - \tilde{\mu}_{a} \mathcal{B}^{a}_{\beta \alpha} \dot{\bar{x}}^{\beta},
      \qquad
      \dot{\tilde{\mu}}_{a} = 0.
    \end{array}
  \end{equation}
  In particular, the last equation gives a conservation of the momentum map ${\bf J}$, which simplifies the set of equations further.
  In the non-Abelian case, the conservation of ${\bf J}$ is ``hidden'' in the last equation of \eqref{eq:Control_Hamilton-Poincare-coordinates} since the new variable $\tilde{\mu}$ is not ${\bf J}$ itself (which is conserved): Recall that we defined $\tilde{\mu}_{a} = \parentheses{ \Ad_{g}^{*} {\bf J}(\lambda_{x}) }_{a}$, which reduces to $\tilde{\mu}_{a} = {\bf J}(\lambda_{x})_{a}$ in the Abelian case.
  Notice also that, after solving for $(\bar{x}, \bar{\lambda})$, the second equation \eqref{eq:Control_Hamilton-Poincare-coordinates-Abelian} is solved by quadrature: The equation reduces to the form $g^{-1}(t)\dot{g}(t) = \zeta(t)$, where $\zeta(t)$ is a known curve in the Lie algebra $\mathfrak{g}$, and thus we can integrate the equation easily to obtain
  \begin{equation*}
    g(t) = \exp\parentheses{ \int_{0}^{t} \zeta(s)\,ds},
  \end{equation*}
  since $G$ is Abelian and thus all the bracket terms in the iterated integrals coming from the Picard iteration vanish (see, e.g., \citet{Is2002}).
\end{remark}

\begin{remark}
  \label{remark:ComparisonWithBlSc2000-2}
  The reduction of \citet{BlSc2000} takes advantage of the fact that the momentum map ${\bf J}$ vanishes due to the transversality condition.
  In our setting, the endpoints are fixed and thus this does not hold in general; hence the equations cannot be reduced to the cotangent bundle $T^{*}(M/G)$ as discussed at the end of Section~3 of \cite{BlSc2000}.
  This is why our reduced equations are slightly more complicated than theirs.
  Note, however, that the vanishing of ${\bf J}$ implies $\tilde{\mu} = 0$ and thus our result simplifies to theirs, except of course the differences in our settings and formulations mentioned in Section~\ref{ssec:MainResultsAndComparison} and Remark~\ref{remark:ComparisonWithBlSc2000}.
\end{remark}

\section{How Do We Choose the Principal Connection?}
\label{sec:PrincipalConnection}
We have shown that an optimal control system with symmetry may be reduced to the Hamilton--Poincar\'e equations~\eqref{eq:Control_Hamilton-Poincare}.
However, we did not address the issue of how we should choose the principal connection form $\mathcal{A}$ introduced in \eqref{eq:mathcalA}.
Whereas sometimes the problem setting provides a natural choice of principal connection, such as the mechanical connection in the falling cat problem~(see, e.g., \citet{Mo1990}), it is often not clear what choice has to be made.
One may realize the same setting as the falling cat problem (the ``purely kinematic'' case discussed below in Example~\ref{ex:PurelyKinematicCase}) by {\em choosing} some particular symmetry subgroup of a larger symmetry group of the system; however, this means that one is forced to make a particular choice of symmetry group even when a larger symmetry group is available.
See, e.g., Examples~\ref{ex:mathcalA-Snakeboard-R2} and \ref{ex:mathcalA-Snakeboard} below: The choice $G = \R^{2}$ realizes the ``purely kinematic'' case but we have $\R^{2} \times SO(2)$ as a larger symmetry group.

In this section, we show a construction of principal construction that does not impose such constraints on the choice of the symmetry group $G$.
This construction is particularly explicit for affine and kinematic control systems (Sections~\ref{ssec:NonholonomicConnection} and \ref{ssec:AffineOptimalControlSystem}), but may as well be formulated for more general settings under certain assumptions (Section~\ref{ssec:MomentumMapAndConnection}).

\subsection{Principal Connection}
Let $\mathcal{O}(x)$ be the orbit of the $G$-action $\Phi$ on $M$ (defined in Section~\ref{ssec:SymmetryInNonlinearControlSystems}) through $x \in M$, and $\mathcal{V}_{x}$ be its tangent space at $x$, i.e.,
\begin{equation*}
  \mathcal{O}(x) \defeq \setdef{ \Phi_{g}(x) \in M }{ g \in G },
  \qquad
  \mathcal{V}_{x} \defeq T_{x}\mathcal{O}(x).
\end{equation*}
Then a {\em principal connection} $\mathcal{H}$ on the principal bundle $\pi: Q \to Q/G$ is given by a $G$-invariant distribution that complements $\mathcal{V}$, i.e.,
\begin{equation*}
  T\Phi_{g}(\mathcal{H}) = \mathcal{H} \text{ for $\forall g \in G$}
  \quad\text{and}\quad
  T_{x}M = \mathcal{H}_{x} \oplus \mathcal{V}_{x} \text{ for $\forall x \in M$}.
\end{equation*}
Then one may find the corresponding {\em principal connection form} $\mathcal{A}: TM \to \mathfrak{g}$ such that $\mathcal{A}_{x}(\xi_{M}(x)) = \xi$ for any $\xi \in \mathfrak{g}$ and $\ker\mathcal{A}_{x} = \mathcal{H}_{x}$ for any $x \in M$.

\subsection{Nonholonomic Connection}
\label{ssec:NonholonomicConnection}
One example of principal connection is the so-called {\em nonholonomic connection} introduced in \citet[Section~6.4]{BlKrMaMu1996} (see also \citet[Section~3]{CeMaRa2001b}) for reduction of nonholonomic mechanical systems.
As we shall see in Section~\ref{ssec:AffineOptimalControlSystem}, the nonholonomic connection---complemented by the results of Section~\ref{ssec:MomentumMapAndConnection}---turns out to be a natural choice of principal connection for {\em affine optimal control systems}.

First we make the following ``dimension assumption''~\cite{BlKrMaMu1996}:
\begin{equation*}
  T_{x}M = \mathcal{D}_{x} + \mathcal{V}_{x},
\end{equation*}
where we recall that $\mathcal{D}$ is the distribution defined by the control vector fields (see Eq.~\eqref{eq:mathcalD}).
Now let (see Fig.~\ref{fig:NonholonomicConnection})
\begin{equation*}
  \mathcal{S}_{x} \defeq \mathcal{D}_{x} \cap \mathcal{V}_{x}.
\end{equation*}
Then one may choose, exploiting an additional geometric structure, a certain complementary subspace $\mathcal{H}_{x}$ of $\mathcal{S}_{x}$ in $\mathcal{D}_{x}$ to write $\mathcal{D}_{x}$ as the direct sum of them:
\begin{equation*}
  \mathcal{D}_{x} = \mathcal{H}_{x} \oplus \mathcal{S}_{x}.
\end{equation*}
One may also introduce a complementary subspace $\mathcal{U}_{x}$ to $\mathcal{S}_{x}$ in $\mathcal{V}_{x}$ as well:
\begin{equation*}
  \mathcal{V}_{x} = \mathcal{S}_{x} \oplus \mathcal{U}_{x}.
\end{equation*}
As a result, we have the following decomposition of the tangent space $T_{x}M$:
\begin{equation*}
  T_{x}M = \mathcal{H}_{x} \oplus \mathcal{V}_{x}
  = \mathcal{H}_{x} \oplus \mathcal{S}_{x} \oplus \mathcal{U}_{x}.
\end{equation*}
If, in addition, $\mathcal{H}$ is $G$-invariant, i.e., $T\Phi_{g}(\mathcal{H}) = \mathcal{H}$, then it defines a principal connection on the principal bundle $\pi: M \to M/G$; it is called the {\em nonholonomic connection}~\cite{BlKrMaMu1996}.
Note, however, that {\em the choice of $\mathcal{H}$ is not unique without some additional structure.}
We will come back to this issue later in the subsection to follow.

\begin{figure}[ht!]
  \centering
  \includegraphics[width=.55\linewidth]{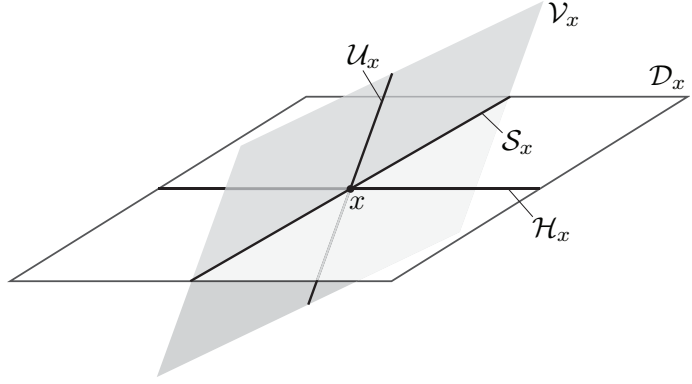}
  \caption{Nonholonomic connection~\cite{BlKrMaMu1996, CeMaRa2001b}. $\mathcal{D}_{x}$ is spanned by the control vector fields $\{X_{i}\}_{i=1}^{d}$; $\mathcal{V}_{x}$ is the tangent space to the group orbit through $x \in M$; $\mathcal{H}_{x}$ defines a principal connection.}
  \label{fig:NonholonomicConnection}
\end{figure}

Using the nonholonomic connection, the reduced control system~\eqref{eq:ReducedControlSystem} can be written as
\begin{equation}
  \label{eq:ReducedControlSystem-affine}
  \dot{\bar{x}} = \bar{X}_{0}(\bar{x}) + \sum_{i=1}^{d} u_{i} \bar{X}_{i}(\bar{x}),
  \qquad
  \tilde{\xi}_{\bar{x}} = \brackets{x, \mathcal{A}_{x} \cdot X_{0}(x) }_{G} + \sum_{i=1}^{d} u_{i}\,\brackets{x, \mathcal{A}_{x} \cdot X_{i}(x) }_{G},
\end{equation}
where $\bar{X}_{i} \defeq T\pi(X_{i})$ for $i = 0, 1, \dots, d$.

The following special case, often called the {\em ``purely kinematic''} case~\cite{BlKrMaMu1996}, gives a simple (although somewhat trivial) example of nonholonomic connection:
\begin{example}[Purely kinematic case---Control of deformable bodies and robotic locomotion]
  \label{ex:PurelyKinematicCase}
  Consider the special case where the tangent space to the group orbit $\mathcal{V}_{x} = T_{x}\mathcal{O}(x)$ exactly complements the $G$-invariant distribution $\mathcal{D}_{x}$, i.e., $\mathcal{S}_{x} = 0$ and thus
  \begin{equation*}
    T_{x}M = \mathcal{D}_{x} \oplus \mathcal{V}_{x}.
  \end{equation*}
  This is the special case called ``purely kinematic'' case or ``Chaplygin systems'' in the context of nonholonomic mechanics~\cite{BlKrMaMu1996}.
  In this case, $\mathcal{D}_{x}$ itself gives the horizontal space $\mathcal{H}_{x}$ and thus defines the connection form $\mathcal{A}: TM \to \mathfrak{g}$ such that $\ker\mathcal{A}_{x} = \mathcal{D}_{x}$ (recall the $G$-symmetry of $\mathcal{D}$, i.e., Eq.~\eqref{eq:mathcalD-symmetry}).
  As a result, Eq.~\eqref{eq:ReducedControlSystem-affine} becomes
  \begin{equation*}
    \dot{\bar{x}} = \bar{X}_{0}(\bar{x}) + \sum_{i=1}^{d} u_{i} \bar{X}_{i}(\bar{x}),
    \qquad
    \tilde{\xi}_{\bar{x}} = \brackets{ x, \mathcal{A}_{x}(X_{0}(x)) }_{G}.
  \end{equation*}
  
  In particular, for the drift-free case, i.e., $X_{0}(x) = 0$, we have $\mathcal{A}_{x}(X_{0}(x)) = 0$ and so $\tilde{\xi}_{\bar{x}} = [x, \mathcal{A}_{x}(\dot{x})]_{G} = 0$, which implies $ \mathcal{A}_{x}(\dot{x}) = 0$.
  With local coordinates $(\bar{x}, g)$ for $M$, we may express the connection $\mathcal{A}$ as
  \begin{equation*}
    \mathcal{A}_{(\bar{x},g)}\parentheses{ \dot{\bar{x}},\dot{g} } = \Ad_{g} \parentheses{ g^{-1}\dot{g} + \mathcal{A}(\bar{x})\,\dot{\bar{x}} },
  \end{equation*}
  where we slightly abused the notation to use $\mathcal{A}(\bar{x})$ as a coordinate expression for the connection form $\mathcal{A}_{(\bar{x},g)}$.
  As a result, Eq.~\eqref{eq:ReducedControlSystem-affine} becomes
  \begin{equation*}
    \dot{\bar{x}} = \sum_{i=1}^{d} u_{i} \bar{X}_{i}(\bar{x}),
    \qquad
    g^{-1}\dot{g} = -\mathcal{A}(\bar{x}) \dot{\bar{x}}.
  \end{equation*}
  This is the basic setting for control of deformable bodies (see, e.g., \citet{Mo1993a}) and also of robotic locomotion (see, e.g., \citet{LiCa1993}, \citet{KeMu1995}, and \citet[Chapters~7 and 8]{MuLiSa1994}); for the former, the connection form $\mathcal{A}$ is defined by the mechanical connection (see, e.g., \citet[Section~2.1]{MaMiOrPeRa2007}) whereas for the latter it is defined by the distribution $\mathcal{D}$ arising from the nonholonomic constraints.
\end{example}

{\em However, in general, $\mathcal{S}_{x} = \mathcal{D}_{x} \cap \mathcal{V}_{x} \neq 0$, and so the choice of the subspace $\mathcal{H}_{x}$ is not trivial, and thus we need to resort to additional ingredients to specify $\mathcal{H}$}; this is the topic of the next subsection.

\subsection{Momentum Map and Principal Connection}
\label{ssec:MomentumMapAndConnection}
To get around the above-mentioned difficulty in specifying the principal connection $\mathcal{H}$, we propose a way to exploit the momentum map (see Eq.~\eqref{eq:J}) corresponding to the symmetry group.
Specifically, we give a generalization of the mechanical connection (see, e.g., \citet[Section~2.1]{MaMiOrPeRa2007}) for systems with degenerate Hamiltonians.

The main result in this subsection, Proposition~\ref{prop:mathcalH-PrincipalConnection}, does not assume the affine optimal control setting, but is proved under quite strong assumptions.
In Section~\ref{ssec:AffineOptimalControlSystem} below, we show that these assumptions are automatically satisfied for a certain class of affine optimal control systems, and also that the construction of principal connection developed here gives a unique choice of the horizontal space $\mathcal{H}$.

Let ${\bf J}: T^{*}M \to \mathfrak{g}^{*}$ be the momentum map for the Hamiltonian system~\eqref{eq:HamiltonianSystem} associated with the optimal control of the nonlinear control system~\eqref{eq:NonlinearControlSystem}.
Recall (see, e.g., \citet[Section~2.1]{MaMiOrPeRa2007}) that the mechanical connection form $\mathcal{A}: TM \to \mathfrak{g}$ is defined by
\begin{equation*}
  \mathcal{A} = \mathbb{I}^{-1} \circ {\bf J} \circ \FL,
\end{equation*}
with the locked inertia tensor $\mathbb{I}: \mathfrak{g} \to \mathfrak{g}^{*}$ and a Lagrangian $L: TM \to \R$; $\FL: TM \to T^{*}M$ is the Legendre transformation defined by
\begin{equation*}
  \ip{ \FL(v_{x}) }{ w_{x} } = \left.\od{}{\varepsilon} L(v_{x} + \varepsilon\,w_{x}) \right|_{\varepsilon=0}
\end{equation*}
for any $v_{x}, w_{x} \in T_{x}M$.
This definition does not directly apply to our setting, since there is usually no such Lagrangian $L$ in the optimal control setting.

Therefore, we need to generalize the notion of the mechanical connection here:
Let $\Phi: G \times M \to M$ be a free and proper action of a Lie group $G$, and $H: T^{*}M \to \R$ be a Hamiltonian.
Define $\FH: T^{*}M \to TM$ by
\begin{equation*}
  \ip{ \FH(\alpha_{x}) }{ \beta_{x} } = \left.\od{}{\varepsilon} H(\alpha_{x} + \varepsilon\,\beta_{x}) \right|_{\varepsilon=0}
\end{equation*}
for any $\alpha_{x}, \beta_{x} \in T^{*}_{x}M$.
We assume that $\FH: T^{*}M \to TM$ is linear and thus $\mathcal{H} \subset TM$ defined by
\begin{equation*}
  \mathcal{H} \defeq \FH \parentheses{ {\bf J}^{-1}(0) }
\end{equation*}
gives a distribution on $M$.
Then, under certain assumptions, $\mathcal{H}$ gives a principal connection on $\pi: M \to M/G$:
\begin{proposition}
  \label{prop:mathcalH-PrincipalConnection}
  Let $\mathcal{V}_{x} = T_{x}\mathcal{O}(x)$ be the tangent space to the group orbit $\mathcal{O}$ of the action $\Phi$.
  Suppose that the Hamiltonian $H$ is $G$-invariant, $\FH: T^{*}M \to TM$ is a linear map that is non-degenerate on ${\bf J}^{-1}(0)$, and also that the intersection of $\mathcal{V}$ and $\mathcal{H}$ is trivial, i.e., $\mathcal{H}_{x} \cap \mathcal{V}_{x} = 0$ for any $x \in M$.
  Then $\mathcal{H}$ defines a principal connection on $\pi: M \to M/G$.
\end{proposition}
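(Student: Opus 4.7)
The plan is to verify directly the two defining properties of a principal connection laid out at the start of Section~\ref{sec:PrincipalConnection}: $G$-invariance of the distribution $\mathcal{H}$ and the pointwise decomposition $T_{x}M = \mathcal{H}_{x} \oplus \mathcal{V}_{x}$. Since $\mathcal{H}_{x} \cap \mathcal{V}_{x} = 0$ is assumed, the decomposition will reduce to matching dimensions; and equivariance will follow from the $G$-invariance of $H$ through $\FH$, together with the standard equivariance of the cotangent-lift momentum map ${\bf J}$.

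First I would establish that $\FH: T^{*}M \to TM$ is $G$-equivariant, i.e., $\FH \circ T^{*}\Phi_{g^{-1}} = T\Phi_{g} \circ \FH$. This drops out of differentiating the $G$-invariance identity $H \circ T^{*}\Phi_{g^{-1}} = H$ in the fiber direction and using that the cotangent lift restricts to a linear isomorphism on each fiber; the resulting identity simply transports $\FH$ along the $G$-action. Because the cotangent-lifted action on $T^{*}M$ admits ${\bf J}$ as an $\operatorname{Ad}^{*}$-equivariant momentum map, the level set ${\bf J}^{-1}(0)$ is invariant under $T^{*}\Phi_{g^{-1}}$, and applying $\FH$ to both sides gives $T\Phi_{g}(\mathcal{H}_{x}) = \mathcal{H}_{gx}$, which is the desired $G$-invariance.

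Next I would prove the direct sum decomposition by a dimension count. Freeness of the action yields $\dim \mathcal{V}_{x} = \dim G$ and also makes the infinitesimal-generator map $\xi \mapsto \xi_{M}(x)$ injective. By the defining relation $\ip{{\bf J}(\lambda_{x})}{\xi} = \ip{\lambda_{x}}{\xi_{M}(x)}$, the fiber-restricted map ${\bf J}_{x}: T^{*}_{x}M \to \mathfrak{g}^{*}$ is then surjective, so $\dim \ker {\bf J}_{x} = \dim M - \dim G$. The non-degeneracy hypothesis on $\FH|_{{\bf J}^{-1}(0)}$ ensures $\FH$ is injective on $\ker {\bf J}_{x}$, so $\dim \mathcal{H}_{x} = \dim M - \dim G$. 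Adding dimensions and invoking the triviality of the intersection, $T_{x}M = \mathcal{H}_{x} \oplus \mathcal{V}_{x}$. Smoothness of the subbundle follows from the smoothness and constant rank of $\FH|_{{\bf J}^{-1}(0)}$.

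I expect the main substantive step to be the dimension count, specifically the use of surjectivity of ${\bf J}_{x}$ to argue that the assumption $\mathcal{H}_{x} \cap \mathcal{V}_{x} = 0$ is by itself enough to force the decomposition, rather than needing a separate lower bound on $\dim \mathcal{H}_{x}$. A secondary subtlety worth flagging is that $\FH$ is only assumed non-degenerate on ${\bf J}^{-1}(0)$ and not on all of $T^{*}M$; this is precisely what allows the construction to accommodate the degenerate Hamiltonians that arise in optimal control and that make a direct Lagrangian/mechanical-connection definition unavailable, as emphasized in the discussion preceding the proposition.
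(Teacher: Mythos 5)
Your proposal is correct and follows essentially the same route as the paper's proof: equivariance of $\FH$ obtained by differentiating the $G$-invariance of $H$, invariance of ${\bf J}^{-1}(0)$, and a codimension-$\dim G$ count on $\mathcal{H}_{x}$ combined with the assumed trivial intersection to get $T_{x}M = \mathcal{H}_{x} \oplus \mathcal{V}_{x}$. The only cosmetic differences are that the paper verifies the invariance of ${\bf J}^{-1}(0)$ by direct computation rather than citing $\Ad^{*}$-equivariance, and phrases the surjectivity of ${\bf J}_{x}$ via ``$0$ is a regular value since the action is free.''
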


\begin{proof}
  See Appendix~\ref{sec:mathcalH-PrincipalConnection-proof}.
\end{proof}

The $G$-invariance of the Hamiltonian is always satisfied in our setting as mentioned in \ref{ssec:PoissonRedAndHPEq}.
The other conditions are somewhat contrived, and it is not clear as to whether one may further scrutinize and weaken the conditions for general settings.
However, in the next subsection, we show that the linearity of $\FH$ and $\mathcal{H}_{x} \cap \mathcal{V}_{x} = 0$ are automatically satisfied for a certain class of affine optimal control problems.

\subsection{Application to Affine Optimal Control Systems with Quadratic Cost Functions}
\label{ssec:AffineOptimalControlSystem}
We apply Proposition~\ref{prop:mathcalH-PrincipalConnection} to a certain class of affine optimal control problems and show that Proposition~\ref{prop:mathcalH-PrincipalConnection} helps us identify the unique principal connection $\mathcal{H}$ even in the non-purely kinematic case.

Consider the following affine optimal control problem:
\begin{equation}
  \label{eq:AffineOptimalControlSystem}
  \dot{x} = X_{0}(x) + \sum_{i=1}^{d} u_{i} X_{i}(x),
  \qquad
  C(x, u) = \frac{1}{2} g_{ij} u_{i} u_{j},
\end{equation}
where $g_{ij} \defeq g(X_{i}, X_{j})$ for $1 \le i, j \le d$ with a $G$-invariant sub-Riemannian metric $g$ on $M$ that is positive-definite on the distribution $\mathcal{D} \defeq \Span\{ X_{1}, \dots, X_{d} \}$.

Let us first introduce a couple of notions to be used in the discussion to follow:
\begin{definition}
  The {\em drift-free} control Hamiltonian $\hat{H}_{\rm df}: T^{*}M \oplus E \to \R$ for the affine control system~\eqref{eq:AffineOptimalControlSystem} is defined by
  \begin{equation*}
    \hat{H}_{\rm df}(\lambda_{x}, u_{x}) \defeq \sum_{i=1}^{d} u_{i} \ip{ \lambda_{x} }{ X_{i}(x) } - \frac{1}{2} g_{ij} u_{i} u_{j}.
  \end{equation*}
  Setting $\F_{\rm c}\hat{H}_{\rm df} = \F_{\rm c}\hat{H} = 0$ gives the optimal control
  \begin{equation}
    \label{eq:u^star-AffineOptimal}
    u^{\star}_{j}(\lambda_{x}) = \ip{\lambda_{x}}{ X_{j}(x) },
  \end{equation}
  and thus we may define the {\em drift-free optimal Hamiltonian} $H_{\rm df}: T^{*}M \to \R$ by
  \begin{equation*}
    H_{\rm df}(\lambda_{x}) \defeq \hat{H}_{\rm df}\parentheses{ \lambda_{x}, u^{\star}_{x}(\lambda_{x}) }
    = \frac{1}{2} g^{ij} \ip{\lambda_{x}}{ X_{i}(x) } \ip{\lambda_{x}}{ X_{j}(x) }.
  \end{equation*}
  For kinematic control systems, i.e., $X_{0}(x) = 0$, we have $\hat{H}_{\rm df} = \hat{H}$ and $H_{\rm df} = H$.
\end{definition}

\begin{remark}
  As we shall see below, the drift-free optimal Hamiltonian is used merely to define a map from ${\bf J}^{-1}(0) \subset T^{*}M$ to $TM$.
  Note also that $H_{\rm df}$ is degenerate unless $d = m \defeq \dim M$, i.e., the system is fully actuated.
\end{remark}

\begin{proposition}
  Suppose that the affine optimal control system~\eqref{eq:AffineOptimalControlSystem} is $G$-invariant in the sense described in Example~\ref{ssec:SymmetryInAffineControlSystems} and Section~\ref{ssec:PMPandSymmetryInOptimalControl}, and also that $\FH_{\rm df}: T^{*}M \to TM$ restricted to ${\bf J}^{-1}(0)$ is non-degenerate.
  Then the distribution
  \begin{equation}
    \label{eq:mathcalH}
    \mathcal{H} \defeq \FH_{\rm df} \parentheses{ {\bf J}^{-1}(0) } \subset TM
  \end{equation}
  defines a principal connection on $\pi: M \to M/G$.
\end{proposition}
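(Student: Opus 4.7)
The strategy is to reduce the claim to Proposition~\ref{prop:mathcalH-PrincipalConnection} by verifying, in this affine-with-quadratic-cost setting, each of the three hypotheses of that proposition: (i) $G$-invariance of $H_{\rm df}$, (ii) linearity of $\FH_{\rm df}$, and (iii) triviality of the intersection $\mathcal{H}_{x}\cap \mathcal{V}_{x}$. The non-degeneracy of $\FH_{\rm df}|_{{\bf J}^{-1}(0)}$ is assumed outright, so nothing is required there.

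Hypotheses (i) and (ii) are short. For (i), the drift-free control Hamiltonian $\hat{H}_{\rm df}$ is simply the control Hamiltonian of the drift-free system $\dot{x} = \sum_{i} u_{i} X_{i}(x)$ with cost $\frac{1}{2} g_{ij} u_{i} u_{j}$. The distribution $\mathcal{D}$ is $G$-invariant by \eqref{eq:mathcalD-symmetry} and the metric $g$ is $G$-invariant by hypothesis, so the drift-free control system and its cost function are $G$-invariant in the sense of \eqref{eq:f-symmetry} and \eqref{eq:C-symmetry}. The argument of Section~\ref{ssec:PMPandSymmetryInOptimalControl} therefore applies verbatim and gives $H_{\rm df}\circ T^{*}\Phi_{g^{-1}} = H_{\rm df}$. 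For (ii), the explicit formula $H_{\rm df}(\lambda_{x}) = \tfrac{1}{2} g^{ij} \ip{\lambda_{x}}{X_{i}(x)} \ip{\lambda_{x}}{X_{j}(x)}$ is quadratic in $\lambda_{x}$, so $\FH_{\rm df}$ is linear with the explicit expression $\FH_{\rm df}(\lambda_{x}) = g^{ij} \ip{\lambda_{x}}{X_{j}(x)} X_{i}(x)$; in particular $\FH_{\rm df}(\lambda_{x}) \in \mathcal{D}_{x}$ for every $\lambda_{x}$.

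The main (though still short) step is (iii). Suppose $v \in \mathcal{H}_{x} \cap \mathcal{V}_{x}$. Write $v = \FH_{\rm df}(\lambda_{x})$ with $\lambda_{x} \in {\bf J}^{-1}(0)$ and $v = \xi_{M}(x)$ with $\xi \in \mathfrak{g}$. Pairing $\lambda_{x}$ with $v$ and using the definition \eqref{eq:J} of ${\bf J}$,
\begin{equation*}
  0 = \ip{ {\bf J}(\lambda_{x}) }{\xi} = \ip{\lambda_{x}}{\xi_{M}(x)} = \ip{\lambda_{x}}{\FH_{\rm df}(\lambda_{x})} = g^{ij} \ip{\lambda_{x}}{X_{i}(x)} \ip{\lambda_{x}}{X_{j}(x)} = 2 H_{\rm df}(\lambda_{x}).
\end{equation*}
Because $g$ is positive-definite on $\mathcal{D}$, the matrix $(g^{ij})$ is positive-definite, so this forces $\ip{\lambda_{x}}{X_{i}(x)} = 0$ for every $i = 1, \dots, d$. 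Substituting back into the explicit formula yields $\FH_{\rm df}(\lambda_{x}) = 0$, i.e.\ $v = 0$. Thus $\mathcal{H}_{x} \cap \mathcal{V}_{x} = \{0\}$.

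With (i)--(iii) in hand, Proposition~\ref{prop:mathcalH-PrincipalConnection} immediately implies that $\mathcal{H} = \FH_{\rm df}({\bf J}^{-1}(0))$ is a principal connection on $\pi: M \to M/G$. The only place where the affine/quadratic-cost structure really intervenes is step (iii), where positive-definiteness of the sub-Riemannian metric on $\mathcal{D}$ converts the algebraic identity $H_{\rm df}(\lambda_{x}) = 0$ into the pointwise vanishing $\ip{\lambda_{x}}{X_{i}(x)} = 0$; this is the key feature that makes the nonholonomic-connection-type splitting $\mathcal{D}_{x} = \mathcal{H}_{x} \oplus \mathcal{S}_{x}$ canonical in this setting, as advertised at the end of Section~\ref{ssec:NonholonomicConnection}.
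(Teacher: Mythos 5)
Your proof is correct and follows essentially the same route as the paper's: verify the hypotheses of Proposition~\ref{prop:mathcalH-PrincipalConnection}, with the only substantive step being $\mathcal{H}_{x}\cap\mathcal{V}_{x}=0$, which you establish exactly as the paper does by pairing $\lambda_{x}\in{\bf J}^{-1}(0)$ with $\FH_{\rm df}(\lambda_{x})=\xi_{M}(x)$ and invoking positive-definiteness of $g^{ij}$. Your explicit verification of the $G$-invariance of $H_{\rm df}$ and the linearity of $\FH_{\rm df}$ is slightly more detailed than the paper's one-line dismissal, but the substance is identical.
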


\begin{proof}
  Clearly, the $G$-invariance of the optimal control system implies that of $H_{\rm df}$ as well.
  Therefore, by Proposition~\ref{prop:mathcalH-PrincipalConnection}, it remains to show $\mathcal{H}_{x} \cap \mathcal{V}_{x} = 0$.

  First notice that the Legendre transformation $\FH_{\rm df}: T^{*}M \to TM$ is given by
  \begin{equation}
    \label{eq:FH_df}
    \alpha_{x} \mapsto \FH_{\rm df}(\alpha_{x}) \defeq g^{ij} \ip{\alpha_{x}}{ X_{i}(x) } X_{j}(x).
  \end{equation}
  Let $\xi $ be an element in $\mathfrak{g}$ such that $\xi_{M}(x)$ is in $\mathcal{H}_{x}$.
  Then $\xi_{M}(x) = \FH_{\rm df}(\alpha_{x})$ for some $\alpha_{x} \in {\bf J}^{-1}(0)$, and thus, we have, using the definition of the momentum map ${\bf J}$,
  \begin{equation*}
    \ip{ \alpha_{x} }{ \FH_{\rm df}(\alpha_{x}) }
    = \ip{ \alpha_{x} }{ \xi_{M}(x) }
    = \ip{ {\bf J}(\alpha_{x}) }{ \xi }
    = 0.
  \end{equation*}
  On the other hand,
  \begin{equation*}
    \ip{ \alpha_{x} }{ \FH_{\rm df}(\alpha_{x}) }
    = g^{ij} \ip{\alpha_{x}}{ X_{i}(x) } \ip{\alpha_{x}}{ X_{j}(x) }.
  \end{equation*}
  Since $g^{ij}$ is positive definite, we have $\ip{\alpha_{x}}{ X_{j}(x) } = 0$ for $j = 1, \dots d$ and hence $\xi_{M}(x) = \FH_{\rm df}(\alpha_{x}) = 0$.
  Therefore, it follows that $\mathcal{H}_{x} \cap \mathcal{V}_{x} = 0$.
\end{proof}

\begin{remark}
  It is clear from Eqs.~\eqref{eq:mathcalH} and \eqref{eq:FH_df} that $\mathcal{H}_{x}$ is a subspace of $\mathcal{D}_{x}$.
  Since $T_{x}M = \mathcal{H}_{x} \oplus \mathcal{V}_{x}$ as well, the definition of $\mathcal{H}_{x}$ coincides that of Section~\ref{ssec:NonholonomicConnection}.
\end{remark}

Let us first show the purely kinematic case:
\begin{example}[Snakeboard~\cite{OsLeMuBu1994, BlKrMaMu1996, KoMa1997c, BuLe2003a} with $\R^{2}$-symmetry]
  \label{ex:mathcalA-Snakeboard-R2}
  We consider a kinematic optimal control problem of the snakeboard shown in Fig.~\ref{fig:Snakeboard}.
  \begin{figure}[htbp]
    \centering
    \includegraphics[width=.55\linewidth]{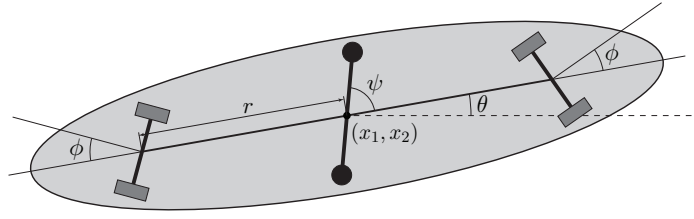}
    \caption{The Snakeboard.}
    \label{fig:Snakeboard}
  \end{figure}
  The configuration space is $M = SE(2) \times \mathbb{S}^{1} \times \mathbb{S}^{1} = \{ (x_{1}, x_{2}, \theta, \psi, \phi) \}$.
  The velocity constraints are given by
  \begin{equation*}
    \dot{x}_{1} + (r \cos\theta \cot\phi)\,\dot{\theta} = 0,
    \quad
    \dot{x}_{2} + (r \sin\theta \cot\phi)\,\dot{\theta} = 0,
  \end{equation*}
  and thus we have $\mathcal{D} = \Span\{ X_{1}, X_{2}, X_{3} \}$ with
  \begin{equation*}
    X_{1}(x) = \cos\theta\,\pd{}{x_{1}} + \sin\theta\,\pd{}{x_{2}} - \frac{\tan\phi}{r}\,\pd{}{\theta},
    \qquad
    X_{2}(x) = \pd{}{\psi},
    \qquad
    X_{3}(x) = \pd{}{\phi},
  \end{equation*}
  where $x = (x_{1}, x_{2}, \theta, \psi, \phi)$.
  Therefore, we may consider the following kinematic control system:
  \begin{equation*}
    \dot{x} = f(x, u) \defeq u_{1} X_{1}(x) + u_{2} X_{2}(x) + u_{3} X_{3}(x),
  \end{equation*}
  or more explicitly,
  \begin{equation*}
    \dot{x}_{1} = u_{1} \cos\theta,
    \qquad
    \dot{x}_{2} = u_{1} \sin\theta,
    \qquad
    \dot{\theta} = -u_{1} \frac{\tan\phi}{r},
    \qquad
    \dot{\psi} = u_{2},
    \qquad
    \dot{\phi} = u_{3}.
  \end{equation*}
  We define the cost function $C: SE(2) \times \R^{3} \to \R$ as follows:
  \begin{equation*}
    C(x, u) = \frac{1}{2}(u_{1}^{2} + u_{2}^{2} + u_{3}^{2}).
  \end{equation*}
  Then the above control system has an $SE(2) \times SO(2)$-symmetry, where $SE(2)$ acting on the $SE(2)$ portion of $M$ by left multiplication and $SO(2)$ acting on the first $\mathbb{S}^{1}$ in $M$, i.e., the variable $\psi$.
  Here we choose the subgroup $G = \R^{2}$ of $SE(2) \times SO(2)$ to show that it realizes the purely kinematic case (see Example~\ref{ex:PurelyKinematicCase}).
  
  Let $\Phi: G \times M \to M$ be the $G$-action on $M$, i.e.,
  \begin{equation*}
    \Phi: ((a, b), (x_{1}, x_{2}, \theta, \psi, \phi)) \mapsto (x_{1} + a, x_{2} + b, \theta, \psi, \phi).
  \end{equation*}
  Also let $\sigma: G \times \R^{3} \to \R^{3}$ be the trivial representation:
  \begin{equation*}
    \sigma: ((a, b), (u_{1}, u_{2}, u_{3})) \mapsto (u_{1}, u_{2}, u_{3}),    
  \end{equation*}
  which induces the action $\Psi: G \times E \to E$ defined by
  \begin{equation*}
    \Psi: ((a, b, \beta), (x_{1}, x_{2}, \theta, \psi, \phi, u_{1}, u_{2}, u_{3})) \mapsto
    (x_{1} + a, x_{2} + b, \theta, \psi, \phi, u_{1}, u_{2}, u_{3}).
  \end{equation*}
   
  The momentum map ${\bf J}: T^{*}M \to T_{(0,0)}\R^{2} \cong \R^{2}$ associated with the action of $\R^{2}$ is
  \begin{equation*}
    {\bf J}(x_{1}, x_{2}, \theta, \psi, \phi, \lambda_{1}, \lambda_{2}, \lambda_{\theta}, \lambda_{\psi}, \lambda_{\phi}) = (\lambda_{1}, \lambda_{2}),
  \end{equation*}
  and then 
  \begin{equation*}
    \mathcal{H} = \FH\parentheses{ {\bf J}^{-1}(0) } = \Span\braces{ X_{1}, X_{2}, X_{3}}.
  \end{equation*}
  So $\mathcal{H} = \mathcal{D}$, and thus this is a purely kinematic case.
\end{example}

A different choice of symmetry group renders the problem non-purely kinematic.
The following example illustrates it; the results here will be later used in the reduction of the system in Example~\ref{ex:Snakeboard}.
\begin{example}[Snakeboard with $\R^{2} \times SO(2)$-symmetry]
  \label{ex:mathcalA-Snakeboard}
  Now we choose $G = \R^{2} \times SO(2)$; this is an Abelian case (see Remark~\ref{remark:AbelianCase}) that gives rise to a non-purely kinematic case.

  Let $\Phi: G \times M \to M$ be the $G$-action on $M$, i.e.,
  \begin{equation*}
    \Phi: ((a, b, \beta), (x_{1}, x_{2}, \theta, \psi, \phi)) \mapsto (x_{1} + a, x_{2} + b, \theta, \psi + \beta, \phi).
  \end{equation*}
  Also let $\sigma: G \times \R^{3} \to \R^{3}$ be the trivial representation:
  \begin{equation*}
    \sigma: ((a, b, \beta), (u_{1}, u_{2}, u_{3})) \mapsto (u_{1}, u_{2}, u_{3}),    
  \end{equation*}
  which induces the action $\Psi: G \times E \to E$ defined by
  \begin{equation*}
    \Psi: ((a, b, \beta), (x_{1}, x_{2}, \theta, \psi, \phi, u_{1}, u_{2}, u_{3})) \mapsto
    (x_{1} + a, x_{2} + b, \theta, \psi + \beta, \phi, u_{1}, u_{2}, u_{3}).
  \end{equation*}
  Then it is straightforward to show that $f$ and $C$ satisfy the symmetry defined in Eqs.~\eqref{eq:f-symmetry} and \eqref{eq:C-symmetry}, respectively.
 
  The momentum map ${\bf J}: T^{*}M \to T_{(0,0)}\R^{2} \times \mathfrak{so}(2) \cong \R^{3}$ associated with the action of $\R^{2} \times SO(2)$ is
  \begin{equation*}
    {\bf J}(x_{1}, x_{2}, \theta, \psi, \phi, \lambda_{1}, \lambda_{2}, \lambda_{\theta}, \lambda_{\psi}, \lambda_{\phi}) = (\lambda_{1}, \lambda_{2}, \lambda_{\psi}),
  \end{equation*}
  and so
  \begin{equation*}
    \mathcal{H} = \FH\parentheses{ {\bf J}^{-1}(0) }
    = \Span\braces{ \cos\theta\,\pd{}{x_{1}} + \sin\theta\,\pd{}{x_{2}} - \frac{\tan\phi}{r}\,\pd{}{\theta},\; \pd{}{\phi} }
    = \Span\braces{ X_{1}, X_{2}}.
  \end{equation*}
  Since $\mathcal{D} = \Span\braces{ X_{1}, X_{2}, X_{3} }$, this is not a purely kinematic case, and $\mathcal{S} = \Span\{ X_{3} \}$.
  The connection form $\mathcal{A}: TM \to \mathfrak{g}$ is then given by
  \begin{equation}
    \label{eq:mathcalA-Snakeboard}
    \mathcal{A}_{(\theta, \phi)} = (dx_{1} + r \cos\theta \cot\phi\, d\theta) \otimes {\bf e}_{1} + (dx_{2} + r \sin\theta \cot\phi\, d\theta) \otimes {\bf e}_{2} + d\psi \otimes {\bf e}_{\psi},
  \end{equation}
  where $\{ {\bf e}_{1}, {\bf e}_{2}, {\bf e}_{\psi} \}$ is a basis for the Lie algebra $T_{(0,0)}\R^{2} \times \mathfrak{so}(2) \cong \R^{3}$.
  We then identify the vertical space $\mathcal{U}$ as follows:
  \begin{equation*}
    \mathcal{U} = \Span\braces{ \pd{}{x_{1}},\; \pd{}{x_{2}},\; \pd{}{\phi} }.
  \end{equation*}
  The reduced curvature form $\tilde{\mathcal{B}}$ is then
  \begin{equation}
    \label{eq:tildemathcalB-Snakeboard}
    \tilde{\mathcal{B}}_{(\theta, \phi)}
    = r \cos\theta \csc^{2}\phi\, d\theta \wedge d\phi \otimes {\bf e}_{1} + r \sin\theta \csc^{2}\phi\, d\theta \wedge d\phi \otimes {\bf e}_{2}.
  \end{equation}
\end{example}

\section{Examples}
\label{sec:Examples}
This section shows various examples to illustrate how the theory specializes to several previous works on the subject (Sections~\ref{ssec:ControlSystemsOnLieGroups}--\ref{ssec:PurelyKinematicCase}), as well as to illustrate how the reduction decouples the optimal control system (Section~\ref{ssec:NonPurelyKinematicCase}).

\subsection{Lie--Poisson Reduction of Optimal Control of Systems on Lie Groups}
\label{ssec:ControlSystemsOnLieGroups}
Consider, as a special case, the nonlinear control system~\eqref{eq:NonlinearControlSystem} on a Lie group $G$, i.e., $M = G$, with symmetry under the action of $G$ on itself by left translation:
\begin{equation*}
  L_{g}: G \to G;
  \quad
  h \mapsto g h
\end{equation*}
for any $g \in G$.
This case is particularly simple because we do not need a principal connection and the reduced system is defined on the Lie algebra $\mathfrak{g}$.

Recall that the associated bundle $E/G = M \times_{G} \R^{d}$ is a bundle over $M/G$; however, $M = G$ here, and so its base space becomes $G/G$, i.e., a point; hence $E/G \cong \R^{d} = \{ \bar{u} \}$ and the map $\bar{f}_{M/G}$ becomes immaterial here.
On the other hand, the quotient $TM/G$ becomes $TG/G \cong \mathfrak{g}$.
Therefore, we have $\bar{f}_{\mathfrak{g}}: \R^{d} \to \mathfrak{g}$ and the control system reduces to
\begin{equation*}
  \xi(t) = \bar{f}_{\mathfrak{g}}(\bar{u}(t)).
\end{equation*}
where $\xi \defeq T_{g}L_{g^{-1}}(\dot{g})$.

In particular, consider the affine control system~\eqref{eq:f-affine} on the Lie group $G$.
The invariance of $X_{0}$, i.e., Eq.~\eqref{eq:X_0-symmetry}, implies that there exists an element $\zeta_{0} \in \mathfrak{g}$ such that $X_{0}(g) = T_{e}L_{g}(\zeta_{0})$ for any $g \in G$, where $e \in G$ is the identity.
Likewise, the invariance of the distribution $\mathcal{D} \subset TG$, i.e., Eq.~\eqref{eq:mathcalD-symmetry}, implies that there exists a subspace $\mathfrak{d}$ in the Lie algebra $\mathfrak{g}$ of $G$ such that $\mathcal{D}_{g} = T_{e}L_{g}(\mathfrak{d})$ for any $g \in G$; so there exists a basis $\{ \zeta_{i} \}_{i=1}^{d}$ for $\mathfrak{d}$ such that $X_{i}(g) = T_{e}L_{g}(\zeta_{i})$ for any $g \in G$ and $i = 1, \dots, d$.
Therefore, Eq.~\eqref{eq:R} implies that the matrix $R(h, g)$ becomes the $d \times d$ identity matrix for any $h, g \in G$.
So the corresponding action $\Psi_{g}: G \times \R^{d} \to G \times \R^{d}$ becomes trivial on the second slot:
\begin{equation}
  \label{eq:Psi-LieGroup}
  \Psi_{g}: (h, u) \mapsto ( g h, u ).
\end{equation}
Hence the quotient $E/G$ becomes
\begin{equation}
  \label{eq:E/G-LieGroup}
  E/G = (G \times \R^{d})/G = (G/G) \times \R^{d} \cong \R^{d} = \{ u \},
\end{equation}
whereas we have $TG/G \cong \mathfrak{g}$.
Now, since $f: G \times \R^{d} \to TG$ takes the form
\begin{equation*}
  f(g, u) = T_{e}L_{g}\parentheses{ \zeta_{0} + \sum_{i=1}^{d} u_{i} \zeta_{i} }, 
\end{equation*}
we obtain the map $\bar{f}_{\mathfrak{g}}: \R^{d} \to \mathfrak{g}$ defined by
\begin{equation}
  \label{eq:barf-LieGroup}
  \bar{f}_{\mathfrak{g}}(u) \defeq \zeta_{0} + \sum_{i=1}^{d} u_{i} \zeta_{i}.
\end{equation}
Therefore, we have the following reduced control system in the Lie algebra $\mathfrak{g}$:
\begin{equation*}
  \xi(t) = \zeta_{0} + \sum_{i=1}^{d} u_{i}(t)\, \zeta_{i},
\end{equation*}
This is the case considered by \citet{Kr1993} (see also \citet[Section~3]{Sa2009}).

Now, assume that the cost function $C: E \to \R$ is also $G$-invariant, i.e., $C \circ \Psi_{h} = C$ for any $h \in G$; then Eq.~\eqref{eq:Psi-LieGroup} implies that, for any $g \in G$, we have $C(g, u) = C(e, u) = \bar{C}(u)$, where $\bar{C}$ is defined on $E/G \cong \R^{d}$ (recall Eq.~\eqref{eq:E/G-LieGroup}).

In this case, the quotient $M/G$ becomes a point and thus the bundle $T(M/G) \oplus \tilde{\mathfrak{g}}$ becomes just $\mathfrak{g}$; as a result, $\tilde{\xi}$ is equal to $\xi$.
Notice also that, since the momentum map is given by ${\bf J}(\lambda_{g}) = T_{e}^{*}R_{g}(\lambda_{g})$, we have
\begin{equation*}
  \tilde{\mu} = [g, {\bf J}(\lambda_{g})]_{G} = [e, \Ad_{g}^{*}{\bf J}(\lambda_{g})]_{G} \cong \Ad_{g}^{*}{\bf J}(\lambda_{g}) = T_{e}^{*}L_{g}(\lambda_{g}) \in \mathfrak{g}^{*},
\end{equation*}
which is the ``body angular momentum.''
Therefore, the Hamilton--Poincar\'e equations~\eqref{eq:Hamilton-Poincare} reduce to the Lie--Poisson equation~\cite{CeMaPeRa2003}:
\begin{equation*}
  \xi = \pd{\bar{H}}{\tilde{\mu}},
  \qquad
  \od{\tilde{\mu}}{t} = \ad^{*}_{\xi}\tilde{\mu}.
\end{equation*}
So Eq.~\eqref{eq:Control_Hamilton-Poincare} becomes
\begin{equation*}
  \xi = \bar{f}_{\tilde{\mathfrak{g}}}^{\star}\parentheses{ \tilde{\mu} },
  \qquad
  \od{\tilde{\mu}}{t} = \ad^{*}_{\xi}\tilde{\mu}.
\end{equation*}
This system with an affine control, Eq.~\eqref{eq:barf-LieGroup}, and the cost function of the form
\begin{equation*}
  C(g, u) = \bar{C}(u) = \frac{1}{2} \sum_{i=1}^{d} I_{i}\,u_{i}^{2}
\end{equation*}
is the case considered by \citet{Kr1993} (see also \citet[Section~5.3]{KoMa1997a} and \citet[Section~7]{Sa2009}).

\subsection{Clebsch Optimal Control Problem}
\label{ssec:ClebschOptimalControl}
Consider the following control system defined by a group action: Let $M$ be a manifold and $G$ a Lie group, and suppose that a $d$-dimensional Lie group $G$ acts on the manifold $M$; hence we have the infinitesimal generator $u_{M} \in \mathfrak{X}(M)$ for any element $u$ in the Lie algebra $\mathfrak{g}$.
Now consider the control system \eqref{eq:NonlinearControlSystem} with $f: M \times \mathfrak{g} \to TM$ defined by
\begin{equation}
  \label{eq:f-Clebsch}
  f(x, u) = u_{M}(x),
\end{equation}
where the element $u$ in $\mathfrak{g}$ is seen as the control here (note that $\mathfrak{g} \cong \R^{d}$ as a vector space).
This is a control system associated with the {\em Clebsch optimal control problem} (see \citet{CoHo2009} and \citet{GaRa2011}).

This problem provides a good example where the action $\sigma: G \times \R^{d} \to \R^{d}$ to the control space $\R^{d}$ is non-trivial (see Remark~\ref{remark:NontrivialActionToControls}).
We define an action of $G$ on $E = M \times \mathfrak{g}$ as follows:
\begin{equation*}
  \Psi_{g}: M \times \mathfrak{g} \to M \times \mathfrak{g};
  \quad
  (x, u) \mapsto \parentheses{ \Phi_{g}(x), \Ad_{g}u }.
\end{equation*}
Then the equivariance of the infinitesimal generator (see, e.g., \citet[Proposition~4.1.26]{AbMa1978}), i.e., $(\Ad_{g}u)_{M}(g x) = T_{x}\Phi_{g}(u_{M}(x))$, gives the equivariance of $f$, i.e., Eq.~\eqref{eq:f-symmetry}.
Now $E/G = M \times_{G} \mathfrak{g} \eqdef \tilde{\mathfrak{g}}$, and so we have $\bar{f}_{M/G}: \tilde{\mathfrak{g}} \to T(M/G)$ and $\bar{f}_{\tilde{\mathfrak{g}}}: \tilde{\mathfrak{g}} \to \tilde{\mathfrak{g}}$ defined by
\begin{equation}
  \label{eq:barf-Clebsch}
  \bar{f}_{M/G}([x, u]_{G}) = T_{x}\pi( u_{M}(x) ) = 0,
  \qquad
  \bar{f}_{\tilde{\mathfrak{g}}}([x, u]_{G}) = [x, \mathcal{A}_{x}( u_{M}(x) ) ]_{G} =  [x, u]_{G}.
\end{equation}
Then the reduced system becomes
\begin{equation*}
  \dot{\bar{x}} = 0,
  \qquad
  \tilde{\xi}_{\bar{x}} = [x, \mathcal{A}_{x}(\dot{x}) ]_{G} = [x, u]_{G}.
\end{equation*}
Hence the point $\bar{x}$ in the base space $M/G$ is fixed, and so the system evolves only in the vertical direction, as one can easily see from Eq.~\eqref{eq:f-Clebsch}.
Therefore, the system is further reduced to
\begin{equation*}
  \mathcal{A}_{x}(\dot{x}) = u.
\end{equation*}

Given a cost function $\ell: M \times \mathfrak{g} \to \mathbb{R}$ such that $\ell(x, u) = \ell(u)$, consider the problem of minimizing the integral
\begin{equation*}
  \int_{0}^{T} \ell(u(t))\,dt
\end{equation*}
subject to Eq.~\eqref{eq:f-Clebsch}, $x(0) = x_{0}$, and $x(T) = x_{T}$; where $x_{0}$ and $x_{T}$ are fixed points in $M$.

It is easy to see that the optimal control is given by
\begin{align}
  \label{eq:u^star-Clebsch}
  \F_{\rm c}\hat{H}\parentheses{ \lambda_{x}, u^{\star}_{x}(\lambda_{x}) } = {\bf J}(\lambda_{x}) - \pd{\ell}{u}(u^{\star}_{x}(\lambda_{x})) = 0
  \iff
  {\bf J}(\lambda_{x}) = \pd{\ell}{u}(u^{\star}_{x}(\lambda_{x})),
\end{align}
assuming this uniquely defines $u^{\star}_{x}(\lambda_{x})$~\cite{GaRa2011}.

Now, from Eq.~\eqref{eq:barf-Clebsch}, $\bar{f}_{M/G}^{\star}([\lambda_{x}]_{G}) = 0$ and $\bar{f}_{\tilde{\mathfrak{g}}}^{\star}([\lambda_{x}]_{G}) = [x, u^{\star}(\lambda_{x})]_{G}$.
Therefore, Eq.~\eqref{eq:Control_Hamilton-Poincare} gives
\begin{equation*}
  \begin{array}{c}
    \dot{\bar{x}} = 0,
    \qquad
    \tilde{\xi} = [x, u^{\star}(\lambda_{x})]_{G},
    \bigskip\\
    \covd{\bar{\lambda}}{t} = -\pd{\bar{H}}{\bar{x}},
    \qquad
    \covd{\tilde{\mu}}{t} = \ad^{*}_{\tilde{\xi}}\tilde{\mu},
  \end{array}
\end{equation*}
The second equation gives 
\begin{equation*}
  [x, \xi]_{G} = [x, u^{\star}(\lambda_{x})]_{G}
  \implies
  \xi = u^{\star}(\lambda_{x}),
\end{equation*}
and the fourth gives, writing $\tilde{\mu} = [x, \mu]_{G}$,
\begin{equation*}
  \covd{}{t}[x, \mu]_{G} = \ad^{*}_{[x, \xi]_{G}} [x, \mu]_{G}
  \implies
  [x, \dot{\mu}]_{G} = [x, \ad^{*}_{\xi}\mu]_{G}
  \implies
  \dot{\mu} = \ad^{*}_{\xi}\mu.
\end{equation*}
since the curve $x(t)$ is vertical, i.e., $\pi(x(t)) = \bar{x}$ is fixed.
However, recall that $\tilde{\mu} = [x, \mu]_{G} \defeq [x, {\bf J}(\lambda_{x})]_{G}$ and thus $\mu = {\bf J}(\lambda_{x})$; then, substituting Eq.~\eqref{eq:u^star-Clebsch} into the above equation, we obtain the following Euler--Poincar\'e equation:
\begin{equation*}
  \od{}{t}\pd{\ell}{u}(u^{\star}_{x}(\lambda_{x})) = \ad^{*}_{u^{\star}(\lambda_{x})}\pd{\ell}{u}(u^{\star}_{x}(\lambda_{x})).
\end{equation*}
This is essentially Theorem~2.2 of \citet{GaRa2011}.

\subsection{Kinematic Optimal Control---Purely Kinematic Case}
\label{ssec:PurelyKinematicCase}
As shown in Sections~\ref{ssec:AffineOptimalControlSystem}, our construction of principal connection is explicit for affine and kinematic sub-Riemannian optimal control problems.
For the purely kinematic case as in Example~\ref{ex:PurelyKinematicCase}, our result recovers that of \cite{Mo1984}:
\begin{example}[Wong's equations~\cite{Wo1970, Mo1984}; {see also \cite[Chapter~4]{CeMaRa2001}}]
  For the kinematic sub-Riemannian optimal control problems~(see \citet{Mo1990, Mo1991a, Mo1993a, Mo1993b, Mo2002} and \citet[Section~7.4]{Bl2003}), we have
  \begin{equation*}
    f(x, u) = \sum_{\alpha=1}^{d} u^{\alpha} X_{\alpha}(x)
  \end{equation*}
  and, given a $G$-invariant sub-Riemannian metric $g$ on $M$ that is positive-definite on the distribution $\mathcal{D} \defeq \Span\{ X_{1}, \dots, X_{d} \}$, the cost function is defined as
  \begin{equation*}
    C(x, u) = \frac{1}{2} g_{\alpha \beta} u^{\alpha} u^{\beta},
  \end{equation*}
  where $g_{\alpha \beta} \defeq g(X_{\alpha}, X_{\beta})$.
  
  Assume that the distribution $\mathcal{D}$ is $G$-invariant and also defines a principal connection form $\mathcal{A}$ on the principal bundle $\pi: M \to M/G$; this is the ``purely kinematic'' case from Section~\ref{ssec:PurelyKinematicCase}.
  In this case, $f(x, u)$ takes values in $\mathcal{D}$; hence $\mathcal{A}(f(x,u)) = 0$ and thus $\bar{f}_{\tilde{\mathfrak{g}}}([x, u]_{G}) = 0$.
  Therefore, Eq.~\eqref{eq:Control_Hamilton-Poincare-coordinates} gives
  \begin{equation*}
    \begin{array}{c}
      \dot{\bar{x}}^{\alpha} = \bar{f}_{M/G}^{\star,\alpha}\parentheses{\bar{x}, \bar{\lambda}, \tilde{\mu}},
      \qquad
      \tilde{\xi}^{a} = 0,
      \bigskip\\
      \dot{\bar{\lambda}}_{\alpha} = -\pd{\bar{H}}{\bar{x}^{\alpha}}
      - \tilde{\mu}_{a} \mathcal{B}^{a}_{\beta \alpha} \dot{\bar{x}}^{\beta},
      \qquad
      \dot{\tilde{\mu}}_{a} = -\tilde{\mu}_{b} C^{b}_{d a} \mathcal{A}^{d}_{\alpha} \dot{\bar{x}}^{\alpha}.
    \end{array}
  \end{equation*}
  Assume that we can write
  \begin{equation*}
    \bar{f}_{M/G}^{\alpha}\parentheses{ \bar{x}, u } = u^{\alpha}.
  \end{equation*}
  Then the optimal control $u^{\star}$ is given by $u^{\star, \alpha} = g^{\alpha \beta} \bar{\lambda}_{\beta}$, and so the reduced optimal Hamiltonian~\eqref{eq:barH} is given by
  \begin{equation*}
    \bar{H}(\bar{x}, \bar{\lambda}) = \frac{1}{2} g^{\alpha \beta} \bar{\lambda}_{\alpha} \bar{\lambda}_{\beta},
  \end{equation*}
  where $g^{\alpha \beta}$ is the inverse of $g_{\alpha \beta}$.
  Therefore, we obtain $\dot{\bar{x}}^{\alpha} = g^{\alpha \beta} \bar{\lambda}_{\beta}$ and $\tilde{\xi}^{a} = 0$ coupled with Wong's equations:
  \begin{equation*}
    \dot{\bar{\lambda}}_{\alpha} = -\frac{1}{2}\pd{g^{\beta \gamma}}{\bar{x}^{\alpha}} \bar{\lambda}_{\beta} \bar{\lambda}_{\gamma}
    - \tilde{\mu}_{a} \mathcal{B}^{a}_{\beta \alpha} \dot{\bar{x}}^{\beta},
    \qquad
    \dot{\tilde{\mu}}_{a} = -\tilde{\mu}_{b} C^{b}_{d a} \mathcal{A}^{d}_{\alpha} \dot{\bar{x}}^{\alpha}.
  \end{equation*}
\end{example}

\subsection{Kinematic Optimal Control---Non-Purely Kinematic Case}
\label{ssec:NonPurelyKinematicCase}
This is the case of main interest in this paper.
Since it is non-purely kinematic, the distribution $\mathcal{D}$ does not define the principal connection, and hence we need to first find the principal connection.
We focus on the Abelian case here, because, as mentioned in Remark~\ref{remark:AbelianCase}, the reduced optimal control system is particularly simple if the symmetry group $G$ is Abelian.
The following kinematic optimal control problem illustrates it (recall that the principal connection is found in Example~\ref{ex:mathcalA-Snakeboard}):
\begin{example}[Snakeboard: Example~\ref{ex:mathcalA-Snakeboard}]
  \label{ex:Snakeboard}
  The optimal control $u^{\star}$, Eq.~\eqref{eq:u^star-AffineOptimal}, is given by
  \begin{equation*}
    u^{\star}_{1} = \lambda_{1} \cos\theta + \lambda_{2} \sin\theta - \lambda_{\theta}\,\frac{\tan\phi}{r},
    \qquad
    u^{\star}_{2} = \lambda_{\psi},
    \qquad
    u^{\star}_{3} = \lambda_{\phi},
  \end{equation*}
  and then the optimal Hamiltonian is
  \begin{equation*}
    H(x, \lambda) = \frac{1}{2}\brackets{
      \parentheses{ \lambda_{1} \cos\theta + \lambda_{2} \sin\theta - \lambda_{\theta}\,\frac{\tan\phi}{r} }^{2}
      + \lambda_{\psi}^{2} + \lambda_{\phi}^{2}
    },
  \end{equation*}
  which gives the optimal control system
  \begin{equation}
    \begin{array}{c}
      \label{eq:OptimalSnakeboard}
      \DS \dot{x}_{1} = \frac{\cos\theta}{r}(r \lambda_{1} \cos\theta + r \lambda_{2} \sin\theta - \lambda_{\theta} \tan\phi),
      \qquad 
      \DS \dot{x}_{2} = \frac{\sin\theta}{r}(r \lambda_{1} \cos\theta + r \lambda_{2} \sin\theta - \lambda_{\theta} \tan\phi),
      \medskip\\
      \DS \dot{\theta} = -\frac{\tan\theta}{r^{2}}(r \lambda_{1} \cos\theta + r \lambda_{2} \sin\theta - \lambda_{\theta} \tan\phi),
      \qquad
      \DS \dot{\psi} = \lambda_{\psi},
      \qquad
      \DS \dot{\phi} = \lambda_{\phi},
      \medskip\\
      \DS \dot{\lambda}_{1} = 0,
      \qquad
      \DS \dot{\lambda}_{2} = 0,
      \qquad
      \DS \dot{\lambda}_{\theta} = \frac{\lambda_{1} \sin\theta - \lambda_{2} \cos\theta}{r}(r \lambda_{1} \cos\theta + r \lambda_{2} \sin\theta - \lambda_{\theta} \tan\phi),
      \medskip\\
      \DS \dot{\lambda}_{\psi} = 0,
      \qquad
      \DS \dot{\lambda}_{\phi} = \frac{\lambda_{\theta} \sec^{2}\phi}{r^{2}}(r \lambda_{1} \cos\theta + r \lambda_{2} \sin\theta - \lambda_{\theta} \tan\phi).
    \end{array}
  \end{equation}

  Let us perform the reduction.
  Introducing $\bar{\lambda} \in T^{*}(M/G)$, $\tilde{\xi} \in \tilde{\mathfrak{g}}$, and $\tilde{\mu} \in \tilde{\mathfrak{g}}^{*}$ defined by (see Eq.~\eqref{eq:mathcalA-Snakeboard} for the expression of the connection form $\mathcal{A}$)
  \begin{equation*}
    \begin{array}{cc}
      \DS
      \bar{\lambda}_{(\theta,\phi)}
      = (\bar{\lambda}_{\theta}, \bar{\lambda}_{\phi})
      \defeq \hl^{*}_{x}(\lambda_{x}) = 
      \parentheses{
        \lambda_{\theta} - \lambda_{1}\, r \cot\phi \cos\theta - \lambda_{2}\, r \cot\phi \sin\theta,\,
        \lambda_{\phi}
      },
      \medskip\\
      \DS
      \tilde{\xi}_{(\theta,\phi)}
      = \parentheses{ \tilde{\xi}_{1}, \tilde{\xi}_{2}, \tilde{\xi}_{\psi} }
      \defeq [x, \mathcal{A}_{x}(\dot{x}) ]_{G}
      = \parentheses{
        \dot{x}_{1} - (r \cot\phi\,\cos\theta)\, \dot{\theta},\,
        \dot{x}_{2} - (r \cot\phi\,\sin\theta)\, \dot{\theta},\,
        \dot{\psi}
      },
      \medskip\\
      \DS
      \tilde{\mu}_{(\theta,\phi)}
      = \parentheses{ \tilde{\mu}_{1}, \tilde{\mu}_{2}, \tilde{\mu}_{\psi} }
      \defeq [x, {\bf J}(\lambda_{x}) ]_{G}
      = \parentheses{ \lambda_{1}, \lambda_{2}, \lambda_{\psi} },
    \end{array}
  \end{equation*}
  the reduced optimal Hamiltonian~\eqref{eq:barH} is written as
  \begin{equation*}
    \bar{H}\parentheses{ \bar{x}, \bar{\lambda}, \tilde{\mu} }
    = \frac{1}{2}\parentheses{
      \frac{ \bar{\lambda}_{\theta}^{2} \tan^{2}\phi }{ r^{2} }
      + \bar{\lambda}_{\phi}^{2}
      + \tilde{\mu}_{\psi}^{2}
    }.
  \end{equation*}
  As a result, the reduced optimal control system~\eqref{eq:Control_Hamilton-Poincare-coordinates-Abelian} gives (see Eq.~\eqref{eq:tildemathcalB-Snakeboard} for the expressions of the curvature $\tilde{\mathcal{B}}$)
  \begin{equation*}
    \begin{array}{cc}
      \DS
      \dot{\theta} = \frac{\tan^{2}\phi}{r^{2}}\,\bar{\lambda}_{\theta},
      \qquad
      \dot{\phi} = \bar{\lambda}_{\phi},
      \qquad
      \tilde{\xi}_{1} = 0,
      \qquad
      \tilde{\xi}_{2} = 0,
      \qquad
      \tilde{\xi}_{\psi} = \tilde{\mu}_{\psi},
      \medskip\\
      \DS
      \dot{\bar{\lambda}}_{\theta}
      = \bar{\lambda}_{\phi}\, r \csc^{2}\phi\, \parentheses{ \tilde{\mu}_{1} \cos\theta + \tilde{\mu}_{2} \sin\theta },
      \qquad
      \dot{\bar{\lambda}}_{\phi}
      = -\bar{\lambda}_{\theta}\, \sec^{2}\phi\, \parentheses{ \bar{\lambda}_{\theta} \tan\phi + \tilde{\mu}_{1}\, r \cos\theta + \tilde{\mu}_{2}\, r \sin\theta },
      \bigskip\\
      \DS
      \dot{\tilde{\mu}}_{1} = 0,
      \qquad
      \dot{\tilde{\mu}}_{2} = 0,
      \qquad
      \dot{\tilde{\mu}}_{\psi} = 0.
    \end{array}
  \end{equation*}
  This system is significantly simpler than the original optimal control system~\eqref{eq:OptimalSnakeboard}: Notice that we now have a decoupled subsystem for the variables $(\theta, \phi, \bar{\lambda}_{\theta}, \bar{\lambda}_{\phi})$; so we may first solve the subsystem and then obtain the dynamics for $(x, y, \psi)$ by quadrature (see Remark~\ref{remark:AbelianCase}).
\end{example}

\section{Conclusion}
We introduced the idea of symmetry reduction and the related geometric tools in Hamiltonian mechanics to nonlinear optimal control systems to define reduced optimal control problems.
Our main focus was on affine and kinematic optimal control problems.
Particularly, we identified a natural choice of principal connection in such problems to perform the reduction explicitly.
The principal connection provides a way to decouple the control system into subsystems, and also, combined with a Poisson reduction to the Pontryagin maximum principle, decouples the corresponding optimal control system into subsystems as well.
The resulting reduced optimal control system is shown to specialize to some previous works.
We also illustrated, through a simple kinematic optimal control problem, how the reduction simplifies the optimal control system.

\section*{Acknowledgments}
I would like to thank the referees, Anthony Bloch, Mar\'ia Barbero-Li\~n\'an, Matthias Kawski, Taeyoung Lee, Melvin Leok, and Joris Vankerschaver for helpful comments and discussions.
This work was partially supported by the National Science Foundation under the grant DMS-1010687.

\appendix

\section{Proof of Proposition~\ref{prop:mathcalH-PrincipalConnection}}
\label{sec:mathcalH-PrincipalConnection-proof}
\begin{lemma}
  If the Hamiltonian $H: T^{*}M \to \R$ is $G$-invariant, then the distribution $\mathcal{H} \subset TM$ is $G$-invariant as well, i.e., $T\Phi_{g}(\mathcal{H}) = \mathcal{H}$ for any $g \in G$.
\end{lemma}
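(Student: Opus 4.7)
The plan is to deduce $G$-invariance of $\mathcal{H}$ from two standard ingredients: an equivariance property of $\mathbb{F}H$ inherited from the $G$-invariance of $H$, and the $\mathrm{Ad}^{*}$-equivariance of the cotangent-lift momentum map $\mathbf{J}$.

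First I would establish the equivariance
\begin{equation*}
  \mathbb{F}H \circ T^{*}\Phi_{g^{-1}} = T\Phi_{g} \circ \mathbb{F}H
\end{equation*}
for every $g \in G$. Starting from $H \circ T^{*}\Phi_{g^{-1}} = H$, I would differentiate at $\alpha_{x}$ in the direction of $T^{*}\Phi_{g^{-1}}(\beta_{x})$ (using that $T^{*}\Phi_{g^{-1}}$ is fiberwise linear), obtaining
\begin{equation*}
  \ip{ \mathbb{F}H(T^{*}\Phi_{g^{-1}}(\alpha_{x})) }{ T^{*}\Phi_{g^{-1}}(\beta_{x}) }
  = \ip{ \mathbb{F}H(\alpha_{x}) }{ \beta_{x} }.
\end{equation*}
Writing $\gamma_{gx} \defeq T^{*}\Phi_{g^{-1}}(\beta_{x})$ and moving the dual map to the other side of the pairing via the defining identity $\ip{T^{*}\Phi_{g}(\gamma_{gx})}{v_{x}} = \ip{\gamma_{gx}}{T\Phi_{g}(v_{x})}$, I obtain the asserted equivariance by the nondegeneracy of the pairing and the surjectivity of $T^{*}\Phi_{g^{-1}}$.

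Next, I would invoke the well-known fact that the cotangent-lift momentum map $\mathbf{J}: T^{*}M \to \mathfrak{g}^{*}$ defined by Eq.~\eqref{eq:J} is $\mathrm{Ad}^{*}$-equivariant, i.e.,
\begin{equation*}
  \mathbf{J} \circ T^{*}\Phi_{g^{-1}} = \Ad^{*}_{g^{-1}} \circ\, \mathbf{J}.
\end{equation*}
This immediately implies that the zero level set is preserved: $T^{*}\Phi_{g^{-1}}(\mathbf{J}^{-1}(0)) = \mathbf{J}^{-1}(0)$ for every $g \in G$.

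Combining the two, I would conclude
\begin{equation*}
  T\Phi_{g}(\mathcal{H})
  = T\Phi_{g}\parentheses{ \mathbb{F}H(\mathbf{J}^{-1}(0)) }
  = \mathbb{F}H\parentheses{ T^{*}\Phi_{g^{-1}}(\mathbf{J}^{-1}(0)) }
  = \mathbb{F}H(\mathbf{J}^{-1}(0))
  = \mathcal{H}.
\end{equation*}
I do not anticipate a serious obstacle here; the only mild subtlety is bookkeeping with the conventions for the cotangent lift (which sign/direction appears in $T^{*}\Phi_{g^{-1}}$ versus $T^{*}\Phi_{g}$), but once the definition is spelled out the derivation of the equivariance of $\mathbb{F}H$ from that of $H$ is a direct consequence of $T^{*}\Phi_{g^{-1}}$ being a fiberwise linear isomorphism.
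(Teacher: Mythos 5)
Your proposal is correct and follows essentially the same route as the paper's proof: establish the equivariance $T\Phi_{g}\circ\FH = \FH\circ T^{*}\Phi_{g^{-1}}$ from the $G$-invariance of $H$, observe that ${\bf J}^{-1}(0)$ is preserved by the cotangent lifts, and compose the two facts. The only cosmetic difference is that you cite the $\Ad^{*}$-equivariance of ${\bf J}$ as a known fact where the paper verifies the invariance of the zero level set directly, which amounts to the same computation.
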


\begin{proof}
  Let us first show that $\FH: T^{*}M \to TM$ is equivariant, i.e., $T\Phi_{g} \circ \FH = \FH \circ T^{*}\Phi_{g^{-1}}$.
  For any $\alpha_{x} \in T^{*}_{x}M$ and $\beta_{g x} \in T^{*}_{g x}M$, we have, using the $G$-invariance of $H$,
  \begin{align*}
    \ip{ \beta_{x} }{ T\Phi_{g} \circ \FH(\alpha_{x}) }
    &= \ip{ T^{*}\Phi_{g}(\beta_{x}) }{ \FH(\alpha_{x}) }
    \\
    &= \left. \od{}{\eps} H \parentheses{ \alpha_{x} + \eps\,T^{*}\Phi_{g}(\beta_{x}) } \right|_{\eps=0}
    \\
    &= \left. \od{}{\eps} H \parentheses{ T^{*}\Phi_{g^{-1}}(\alpha_{x}) + \eps\,\beta_{x} } \right|_{\eps=0}
    \\
    &= \ip{ \beta_{x} }{ \FH \circ T^{*}\Phi_{g^{-1}} (\alpha_{x}) }.
  \end{align*}
  On the other hand, ${\bf J}^{-1}(0) \subset T^{*}M$ is $G$-invariant: Let $\alpha_{x} \in {\bf J}^{-1}(0)$; then, for any $g \in G$ and $\xi \in \mathfrak{g}$,  
  \begin{align*}
    \ip{ {\bf J} \circ T^{*}\Phi_{g^{-1}}(\alpha_{x}) }{ \xi }
    &= \ip{ T^{*}\Phi_{g^{-1}}(\alpha_{x}) }{ \xi_{M}(g x) }
    \\
    &= \ip{ \alpha_{x} }{ T\Phi_{g^{-1}} \cdot \xi_{M}(g x) }
    \\
    &= \ip{ \alpha_{x} }{ (\Ad_{g^{-1}}\xi)_{M}(x) }
    \\
    &= \ip{ {\bf J}(\alpha_{x}) }{ \Ad_{g^{-1}}\xi }
    \\
    &= 0,
  \end{align*}
  which implies $T^{*}\Phi_{g^{-1}}(\alpha_{x}) \in {\bf J}^{-1}(0)$; thus we have $T^{*}\Phi_{g^{-1}}\parentheses{ {\bf J}^{-1}(0) } \subset {\bf J}^{-1}(0)$.
  This in turn implies the other inclusion: For if $\alpha_{x} \in {\bf J}^{-1}(0)$ then
  \begin{equation*}
    \alpha_{x} = T\Phi_{g^{-1}} \circ T\Phi_{g}(\alpha_{x}) \in T\Phi_{g^{-1}}({\bf J}^{-1}(0)),
  \end{equation*}
  because $T\Phi_{g}(\alpha_{x}) \in T^{*}\Phi_{g}\parentheses{ {\bf J}^{-1}(0) } \subset {\bf J}^{-1}(0)$ from what we have just shown.
  As a result, we have $T^{*}\Phi_{g^{-1}}\parentheses{ {\bf J}^{-1}(0) } = {\bf J}^{-1}(0)$, and thus 
  \begin{align*}
    T\Phi_{g}(\mathcal{H})
    &= T\Phi_{g} \circ \FH\parentheses{ {\bf J}^{-1}(0) }
    \\
    &= \FH \circ T^{*}\Phi_{g^{-1}} \parentheses{ {\bf J}^{-1}(0) }
    \\
    &= \FH\parentheses{ {\bf J}^{-1}(0) }
    \\
    &= \mathcal{H}. \qedhere
  \end{align*}
\end{proof}

\begin{proof}[Proof of Proposition~\ref{prop:mathcalH-PrincipalConnection}]
  Since $\Phi$ is a free action, any element in $\mathfrak{g}^{*}$ is a regular value (see, e.g., \citet[Section~1.1]{MaMiOrPeRa2007}).
  Therefore, ${\bf J}^{-1}(0) \cap T^{*}_{x}M$ defines a subspace of $T^{*}_{x}M$ of codimension $\dim G$, because ${\bf J}: T^{*}M \to \mathfrak{g}^{*}$ is linear in the fiber variables of $T^{*}M$.
  Since $\FH$ is assumed to be non-degenerate on ${\bf J}^{-1}(0)$, $\mathcal{H} \defeq \FH \parentheses{ {\bf J}^{-1}(0) }$ defines a subspace of $T_{x}M$ of codimension $\dim G$ for each $x \in M$, whereas $\dim \mathcal{V}_{x} = \dim G$.
  Therefore, the assumption $\mathcal{H}_{x} \cap \mathcal{V}_{x} = 0$ implies $T_{x}M = \mathcal{H}_{x} \oplus \mathcal{V}_{x}$.
  By the above lemma, $\mathcal{H}$ is $G$-invariant, and thus defines a principal connection.
\end{proof}

\bibliography{SymRedOptCtrl}
\bibliographystyle{plainnat}

\end{document}